\newtheorem*{thmA*}{Theorem A}
\newtheorem*{thmB*}{Theorem B}
\newtheorem*{lemA*}{Lemma A}
\newtheorem*{assumptionA*}{Assumption A}
\newtheorem*{assumptionB*}{Assumption B}
\newtheorem*{assumptionC*}{Assumption C}
\newtheorem{Winfree Model}{Winfree Model}
\newcommand{\R}{\mathbb R}
\def\be{\begin{equation}}
\def\ee{\end{equation}}
\def\R{\mathbb R}
\numberwithin{figure}{section}
\def\R{\mathbb{R}}
\def\O{\Omega}
\DeclareMathOperator{\diam}{{\rm diam}}
\newcommand{\opnorm}{\@ifstar\@opnorms\@opnorm}
\newcommand{\@opnorms}[1]{%
  \left|\mkern-1.5mu\left|\mkern-1.5mu\left|
   #1
  \right|\mkern-1.5mu\right|\mkern-1.5mu\right|
}
\newcommand{\@opnorm}[2][]{%
  \mathopen{#1|\mkern-1.5mu#1|\mkern-1.5mu#1|}
  #2
  \mathclose{#1|\mkern-1.5mu#1|\mkern-1.5mu#1|}
}
\begin{document}
\bibliographystyle{siam}

\title[]
{On the Existence of $H^1$ solutions for Stationary Linearized Boltzmann Equations in a Small Convex Domain}

\author[I-K.~Chen, P.-H.~Chuang, C.-H.~Hsia, D.~Kawagoe and J.-K.~Su]{I-KUN CHEN, PING-HAN CHUANG, CHUN-HSIUNG HSIA, DAISUKE KAWAGOE AND JHE-KUAN SU}

\date{\today}

\begin{abstract}

In this article, we investigate the incoming boundary value problem for the stationary linearized Boltzmann equations in $ \Omega \subseteq \mathbb{R}^{3}$. For a $C^2$ bounded domain with boundary of positive Gaussian curvature, the existence theory is established in $H^{1}(\Omega \times \mathbb{R}^{3})$ provided that the diameter of the domain $\Omega$ is small enough.    \end{abstract}

\maketitle

\section{Introduction} 


In this article, we are interested in the existence and regularity theory of the following stationary linearized Boltzmann equation: 
\begin{equation}\label{stationarylinearized Boltzmann equation}
v \cdot \nabla_{x}f(x,v)=L(f)(x,v) \mbox{ for } (x,v) \in \Omega \times \mathbb{R}^{3}.
\end{equation}
Here, $\Omega$ is a domain in $\R^3$ with $C^2$ boundary $\partial \Omega$ and $L$ is the linearized collision operator.
The existence of solutions of lower regularity has been studied for linearized and full equations with various boundary conditions, see \cite{Gui 1,Guiraudlinear,Gui 2,GuoKim}. 
As for regularity issue, based on existence results above, the pointwise regularity is studied in \cite{RegularChen,CHK,chenkim,chenH}.  
A recent result \cite{I kun 1} studies the regularity in fractional Sobolev spaces. It proves that solutions are in $L^2(\mathbb{R}^3, H^s(\Omega))$ for $0 \leq s < 1$. Furthermore, the $L^2(\mathbb{R}^3, H^1(\Omega))$ regularity cannot be achieved by the estimates provided in \cite{I kun 1}. It is natural to make a comparison with a famous regularity result \cite{GKTT}, in which the $W^{1,p}$ regularity is proved for the time evolutional full Boltzmann equation with the diffuse boundary condition for $1 \leq p < 2$ and a counterexample for $H^1$ solution is provided for the free transport equation. However, one should be aware that the time evolutional problem does not provide much information for the stationary problem in regularity issue. For example, the Laplace equation can be regarded as the stationary equation of the wave equation while they behave very differently in regard of regulaity. This motivates us to study whether $H^1$ is the critical function space for regularity of the stationary Boltzmann equation. With the help of the smallness of the domain, we prove existence of the $H^1$ solution for the stationary linearized Boltzmann equation with the incoming boundary condition, which gives a partial answer. 

We introduce the setup of problem as follows. We assume that $L(f)$ satisfies the following assumption.
\begin{assumptionA*}
The operator $L(f)$ can be decomposed into the multiplicative term $-\nu(v)f(x,v)$ and the integral operator term
\begin{equation} \label{K}
K(f)(x,v):=\int_{\mathbb{R}^{3}}k(v,v^{*})f(x,v^{*})dv^{*} 
\end{equation}
such that
\begin{align}&k(v,v^*)=k(v^*,v),\\
&\nu_{0}(1+\vert v \vert)^{\gamma} \leq \nu(v) \leq \nu_{1} (1+\vert v \vert)^{\gamma}, \label{AA}\\
&\vert k(v,v^{*}) \vert \lesssim \frac{1}{\vert v-v^{*} \vert(1+\vert v \vert +\vert v^{*} \vert)^{1-\gamma}}e^{-\frac{1-\rho}{4}(\vert v-v^{*} \vert^{2}+(\frac{\vert v \vert^{2}-\vert v^{*} \vert^{2}}{\vert v-v^{*} \vert})^{2})}, \label{AB}\\
&\vert \nabla_{v}k(v,v^{*}) \vert \lesssim \frac{1+\vert v \vert}{\vert v-v^{*} \vert^{2}(1+\vert v \vert +\vert v^{*} \vert)^{1-\gamma}}e^{-\frac{1-\rho}{4}(\vert v-v^{*} \vert^{2}+(\frac{\vert v \vert^{2}-\vert v^{*} \vert^{2}}{\vert v-v^{*} \vert})^{2})}, \label{AC}\\
&|\nabla_{v}\nu(v)|\lesssim(1+|v|)^{\gamma-1} \label{AD},
\end{align}
where $\rho \in (0,1)$ and $\gamma \in [0,1]$.
\end{assumptionA*}
Here we adopt the convention $f \lesssim g$ if there exists a positive constant $C$ such that $f \leq Cg$.

Meanwhile, the problem \eqref{stationarylinearized Boltzmann equation} is supplemented with the incoming boundary condition: 
\begin{equation}
\label{inbdry}
f(x,v)=g(x,v) \mbox{ for } (x,v) \in \Gamma^{-},
\end{equation}
where \begin{equation}\label{Gamma-def}\Gamma^{-}:=\lbrace (x,v)\in \partial \Omega \times \mathbb{R}^{3} \mid n(x)\cdot v<0 \rbrace\end{equation} and $n(x)$ is the outward unit normal of $\partial\Omega$ at $x$.

\begin{remark}

\begin{enumerate}
\item If we adopt the idea of Grad \cite{Grad} and consider the Grad angular cut-off potentials which include the hard sphere, hard potential, and Maxwellian molecular condition, then the condition of \eqref{AB} and the upper bound of \eqref{AA} hold. See Caflisch \cite{Caf 1}.

\item It is worth mentioning that the commonly used cross section $B(|v-v^*|,\theta)= b|v-v^*|^{\gamma} \cos \theta$, where $b$ is a positive constant, leads to all the estimates in Assumption A.
\end{enumerate}
\end{remark}

For the convenience of further discussion, we define
\begin{align*}
\tau_{x,v} &:= \inf\lbrace s > 0 \mid x-sv\in\Omega^{c} \rbrace,\\
q(x,v) &:= x-\tau_{x,v}v.
\end{align*}
With this notation, the equation \eqref{stationarylinearized Boltzmann equation} can be expressed by the following integral form:
\begin{equation} \label{integral form}
\begin{split}
f =& e^{-\nu(v)\tau_{x,v}}g(q(x,v),v)+\int_{0}^{\tau_{x,v}}e^{-\nu(v)s}Kf(x-sv,v)ds\\
=&Jg+S_{\Omega}Kf,
\end{split}
\end{equation}
where
\begin{align}
Jg(x, v) :=& e^{-\nu(v)\tau_{x,v}}g(q(x,v),v), \label{J}\\   
S_{\Omega}h(x, v) :=& \int_{0}^{\tau_{x,v}}e^{-\nu(v)s}h(x-sv,v)ds. \label{S}
\end{align}
Notice that we say $f$ is a solution to \eqref{stationarylinearized Boltzmann equation} with boundary condition \eqref{inbdry} if $f$  satisfies \eqref{integral form}.

We assume that the domain $\Omega$ possesses the following property.
\begin{assumptionB*}
$\Omega \subset \mathbb R^3$ is a $C^2$ bounded domain such that $\partial \Omega$ is of positive Gaussian curvature.
\end{assumptionB*}

Our first main result is the following theorem.
\begin{theorem} \label{main theorem 1}
Suppose $L$ satisfies {\bf Assumption A}, then there exists $\epsilon >0$ such that: for any domain $\Omega$ satisfying {\bf Assumption B} with $\diam(\Omega)<\epsilon$, the boundary value problem \eqref{stationarylinearized Boltzmann equation}-\eqref{inbdry} has a  unique solution $f \in H^{1}( \Omega \times \mathbb R^3)$ if and only if $Jg\in H^{1}(\Omega \times \mathbb{R}^{3})$.
\end{theorem}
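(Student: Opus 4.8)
The plan is to recast the boundary value problem as the operator equation $(I-\SO K)f=Jg$ posed in $H^1(\Omega\times\R^3)$, and to prove that $\SO K$ is a bounded linear operator on $H^1(\Omega\times\R^3)$ whose operator norm tends to $0$ as $\diam(\Omega)\to 0$. Granting this, one chooses $\epsilon$ so small that $\|\SO K\|_{H^1\to H^1}<1$ for every admissible $\Omega$ with $\diam(\Omega)<\epsilon$; then $I-\SO K$ is invertible on $H^1(\Omega\times\R^3)$ with inverse given by the Neumann series $\sum_{n\ge 0}(\SO K)^n$. The stated equivalence and uniqueness follow at once: if $Jg\in H^1$ then $f=(I-\SO K)^{-1}Jg$ is an $H^1$ solution, while conversely any $H^1$ solution $f$ satisfies $Jg=f-\SO Kf\in H^1$ because $\SO K$ preserves $H^1$; the strict contraction then forces uniqueness.

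Thus everything reduces to the single estimate $\|\SO Kf\|_{H^1}\le C(\epsilon)\|f\|_{H^1}$ with $C(\epsilon)\to 0$. First I would record the $L^2$ part. The Grad-type kernel bound \eqref{AB} makes $K$ bounded on $L^2(\R^3_v)$ uniformly in $x$, and because the domain is small the backward exit time obeys $\tau_{x,v}\le \diam(\Omega)/|v|$; inserting this into \eqref{S} and applying Cauchy--Schwarz in the $s$-variable yields a factor of $\diam(\Omega)$, so that $\|\SO Kf\|_{L^2}\lesssim \diam(\Omega)\,\|f\|_{L^2}$.

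The crux is the control of $\nabla_x(\SO K f)$ and $\nabla_v(\SO K f)$. Differentiating \eqref{S} produces two kinds of terms. The \emph{interior} terms, in which the derivative falls on the integrand $Kf(x-sv,v)$, are essentially again of the form $\SO(\,\cdot\,)$ and hence inherit a factor of $\diam(\Omega)$; here one uses that $\nabla_x$ commutes with $K$, that $\nabla_v K$ is controlled by \eqref{AC} (its kernel carries the singularity $|v-v^*|^{-2}$, still integrable in $\R^3_{v^*}$), and that the derivative of the multiplier $e^{-\nu(v)s}$ is handled by \eqref{AD}. The delicate terms are the \emph{boundary} contributions coming from differentiating the upper limit $\tau_{x,v}$: implicit differentiation of the defining relation $x-\tau_{x,v}v\in\partial\Omega$ gives $\nabla_x\tau_{x,v}$ and $\nabla_v\tau_{x,v}$ with a denominator proportional to $n(q)\cdot v$, so these terms, evaluated at the base point $q(x,v)$, carry the grazing singularity $|n(q)\cdot v|^{-1}$. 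This is where the main obstacle lies and where Assumption B is essential: estimating the $L^2$ norm of such a boundary term by foliating $\Omega$ along the direction $v$ introduces the flux factor $|n(q)\cdot v|$, which tames one power of the singularity, while the positive Gaussian curvature is used to bound the remaining boundary integral — a trace-type estimate with weight $|n(q)\cdot v|^{-1}$ — after exploiting the smoothing of $K$ in the velocity variable. Since $\nabla_v\tau_{x,v}$ is moreover proportional to $\tau_{x,v}$, and more generally the $s$-integration along each chord (of length $O(\diam(\Omega))$) supplies further powers of $\diam(\Omega)$, every such term again acquires the smallness needed for the contraction.

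Finally I would assemble the pieces: summing the interior and boundary estimates gives $\|\SO Kf\|_{H^1}\le C(\epsilon)\|f\|_{H^1}$ with $C(\epsilon)\to 0$, whereupon the Neumann series argument of the first paragraph concludes the proof. The step I expect to be the genuine difficulty is the uniform treatment of the grazing boundary terms: one must verify that the constant produced by the curvature-based trace estimate does not overwhelm the gain from the small diameter — in particular that the bound is \emph{uniform} over the whole family of admissible domains of diameter less than $\epsilon$, whose curvatures need not be bounded — and this is precisely the point where the convexity of Assumption B and the decay encoded in \eqref{AB}--\eqref{AC} must be used together.
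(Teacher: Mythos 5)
Your reduction of the theorem to the single claim that $\|S_{\Omega}K\|_{H^1\to H^1}\to 0$ as $\diam(\Omega)\to 0$ is precisely the step that fails, and the paper is structured around avoiding it. Differentiating the upper limit $\tau_{x,v}$ in \eqref{S} produces the boundary term $\nabla_x\tau_{x,v}\,e^{-\nu(v)\tau_{x,v}}Kh(q(x,v),v)$; by \eqref{tau_dx}, the change of variables $x=z+s\frac{v}{|v|}$ (Jacobian $N(z,v)$), and Lemma \ref{circle lemma}, its $L^2(\Omega\times\R^3)$ norm is controlled by $C(\Omega)\|h\|_{L^2(\partial\Omega\times\R^3)}$ --- this is the content of Lemma \ref{estimate on H1}. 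Two obstructions now block your contraction. First, $C(\Omega)$ is essentially the maximal radius of curvature, which under Assumption B alone is \emph{not} bounded in terms of $\diam(\Omega)$: ellipsoids with semi-axes $\epsilon,\delta,\delta$ and $\delta\ll\epsilon$ have positive Gaussian curvature, diameter $2\epsilon$, but maximal radius of curvature of order $\epsilon^2/\delta$, arbitrarily large; so no choice of $\epsilon$ uniform over the admissible family makes this term small. Second, even for a fixed nice shape, converting the trace norm $\|h\|_{L^2(\partial\Omega\times\R^3)}$ back into $\|h\|_{H^1(\Omega\times\R^3)}$ via a trace inequality costs a factor of order $\diam(\Omega)^{-1/2}$, which cancels the diameter gain exactly. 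You flag this as ``the genuine difficulty'' but offer no mechanism to resolve it, so the Neumann series argument on $H^1$ has a hole at its only essential estimate; the paper says this explicitly when it notes there is no direct $H^1$ analogue of Lemma \ref{estimate on L2}.

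The paper's way out is a two-step bootstrap absent from your proposal. It keeps the inequality of Lemma \ref{estimate on H1} in the inhomogeneous form $\|S_{\Omega}Kh\|_{H^1}\lesssim \diam(\Omega)^{1/2}\|h\|_{H^1}+C(\Omega)\|h\|_{L^2(\partial\Omega\times\R^3)}$ and complements it with Lemma \ref{BL2est}: the \emph{trace} of an output, $\|S_{\Omega}Kh\|_{L^2(\partial\Omega\times\R^3)}$, is bounded by $C(\Omega)\|h\|_{L^2(\Omega\times\R^3)}$ --- an interior $L^2$ norm, with no derivatives and no trace theorem, because on $\Gamma_v^+$ the function $S_{\Omega}Kh$ is given by the explicit chord integral. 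Applied to the Picard iterates $h=(S_{\Omega}K)^{i-1}Jg$, this converts the dangerous boundary term into $C(\Omega)^2\|(S_{\Omega}K)^{i-2}Jg\|_{L^2(\Omega\times\R^3)}$, which decays geometrically in $i$ by the $L^2$ contraction of Lemma \ref{estimate on L2}; summing the recursion \eqref{SB} then telescopes and converges no matter how large $C(\Omega)$ is. To repair your write-up, replace the claimed $H^1$ contraction by this pair of lemmas and the summation argument, and run uniqueness through the $L^2$ contraction (as the paper does), since $H^1$ solutions are in particular $L^2$ solutions and no $H^1$ contraction is available.
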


We remark that the condition $Jg \in H^{1}$ in the statement of Theorem \ref{main theorem 1} is implicit. To demonstrate that there is a wide class of functions satisfying this condition, we have the following lemma.

\begin{lemma} \label{lem:H1_Jg}
Let $g:\Gamma^{-} \to \mathbb{R}.$ Suppose $g$ satisfies the following conditions.
\begin{align}
\int_{\mathbb{R}^{3}}\int_{\Gamma_v^-}\vert g(z,v)\vert^{2}d\Sigma(z)dv <& \infty, \label{REMARK est 1}\\
\int_{\mathbb{R}^{3}}\frac{\nu(v)^{2}}{\vert v \vert ^{2}}\int_{\Gamma_v^-}\vert g(z,v)\vert^{2} d\Sigma(z)dv <& \infty, \label{REMARK est 2}\\
\int_{\mathbb{R}^{3}}\int_{\Gamma_{v}^{-}}\vert \nabla_{x}g(z,v)\vert^{2} d\Sigma(z)dv <& \infty, \label{REMARK est 3}\\
\int_{\mathbb{R}^{3}}\int_{\Gamma_{v}^{-}}\vert \nabla_{v}g(z,v)\vert^{2}N^{2}(z,v) d\Sigma(z)dv <& \infty, \label{REMARK est 4}
\end{align}
where
\begin{align*}
\Gamma_{v}^{-} &:= \lbrace x \in \partial \Omega \mid n(x)\cdot v <0 \rbrace, \\   
N(x,v) &:= -n(q(x,v)) \cdot \frac{v}{\vert v \vert}, \\ 
|\nabla_{x}h(z,v)|^{2} &:= g^{ij}\nabla_{\frac{\partial}{\partial x_{i}}}h\nabla_{\frac{\partial}{\partial x_{j}}}h,
\end{align*}
$d\Sigma(z)$ is the surface measure of $\partial \Omega$ and $g^{ij}$ is the $(i,j)$ element of the inverse matrix of the metric tensor on $\partial \Omega$. Then we have $Jg \in H^{1}(\Omega \times \mathbb{R}^{3})$.
\end{lemma}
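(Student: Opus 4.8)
The plan is to verify the three ingredients of membership in $H^{1}(\Omega\times\R^3)$ separately, namely that $Jg$, $\nabla_x Jg$ and $\nabla_v Jg$ all lie in $L^2(\Omega\times\R^3)$, where the derivatives are first computed classically off the grazing set (on which $\tau_{x,v}$ fails to be differentiable but which is Lebesgue-null) and only afterwards recognized as the weak derivatives. Throughout I would work with the geometric change of variables that straightens the transport: for fixed $v$, every $x\in\Omega$ is written as $x=z+sv$ with entry point $z=q(x,v)\in\Gamma_v^-$ and $s=\tau_{x,v}$, where $s$ ranges over $(0,\ell(z,v)/|v|)$ and $\ell(z,v)$ denotes the length of the chord of $\Omega$ issuing from $z$ in direction $v$. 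The associated Jacobian is $dx=|v|\,N(z,v)\,ds\,d\Sigma(z)$, since $|v|N=|n(z)\cdot v|$ is exactly the normal speed at which the flow crosses the boundary; from this the $L^2$ bound on $Jg$ itself follows at once, using $\int_0^{\ell/|v|}e^{-2\nu s}\,ds\le \ell/|v|$ and $\ell\le\diam(\Omega)$, and landing on \eqref{REMARK est 1}.

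The first real step is to differentiate the exit time and exit point. Writing $\partial\Omega=\{\phi=0\}$ with $\nabla\phi=|\nabla\phi|\,n$ and differentiating the identity $\phi(x-\tau_{x,v}v)=0$, I obtain
\[
\nabla_x\tau_{x,v}=-\frac{n(q)}{|v|N},\qquad \nabla_v\tau_{x,v}=\frac{\tau_{x,v}\,n(q)}{|v|N},
\]
and consequently $\nabla_x q=I+\frac{v\otimes n(q)}{|v|N}$ and $\nabla_v q=-\tau_{x,v}I-\frac{\tau_{x,v}}{|v|N}\,v\otimes n(q)$, with $q=q(x,v)$ and $N=N(z,v)$. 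One checks directly that both $\nabla_x q$ and $\nabla_v q$ map into $T_q\partial\Omega$, so the chain rule applied to $g(q(x,v),v)$ only ever sees the surface gradient $\nabla_x g$ of \eqref{REMARK est 3}; their operator norms are $O(1/N)$ and $O(\tau_{x,v}/N)$. Substituting into $Jg=e^{-\nu(v)\tau_{x,v}}g(q,v)$ yields explicit expressions for $\nabla_x Jg$ and $\nabla_v Jg$ in which each term carries either a factor $1/(|v|N)$ (from $\nabla\tau$ and $\nabla q$) or no singularity at all (the terms $\nabla_x g$, $\nabla_v g$ and $\tau_{x,v}\nabla_v\nu$). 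The grazing factor $1/N$ is the source of the difficulty.

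The decisive geometric input is \textbf{Assumption B}. Convexity guarantees that the chord is well defined, and the positive lower bound on the Gaussian, hence on the principal, curvature yields via Blaschke's rolling theorem an internally tangent sphere of fixed radius $R$ at each boundary point; comparing the chord of $\Omega$ with that of this sphere gives the key estimate
\[
\ell(z,v)\le 2R\,N(z,v).
\]
Thus one power of $1/N$ is absorbed by the chord length and a second by the Jacobian factor $N$. Concretely, after the substitution every squared term has the shape $(\text{weight in }v)\times N^{-2}\times|g\text{ or }\nabla g|^2\times e^{-2\nu s}\times s^{k}$, and I integrate in $s$ first, choosing for each term whichever bound on $\int_0^{\ell/|v|}s^k e^{-2\nu s}\,ds$ — either $\lesssim\nu^{-(k+1)}$ or $\lesssim(\ell/|v|)\,\nu^{-k}$ — produces an integrable $v$-weight. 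Combined with $\ell\le 2RN$, $N\le1$, $\nu(v)\ge\nu_0$ and $|\nabla_v\nu|\lesssim(1+|v|)^{\gamma-1}$, this collapses each contribution onto one of the four hypotheses: the $\tau_{x,v}\nabla_v\nu$ term onto \eqref{REMARK est 1}, the $\nu\nabla\tau$ terms onto \eqref{REMARK est 2}, the $\nabla_x g$ terms onto \eqref{REMARK est 3}, and the direct $\nabla_v g$ term onto \eqref{REMARK est 4} — the weight $N^2$ there being produced precisely by one factor of $N$ from the Jacobian and one from $\ell\le 2RN$.

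The main obstacle, and the step that genuinely uses the hypotheses, is this control of the grazing singularity: the derivatives of the exit map blow up like $1/N$ near the velocities tangent to $\partial\Omega$, and only the curvature-driven bound $\ell\lesssim N$ renders the resulting boundary integrals convergent and expressible through \eqref{REMARK est 1}--\eqref{REMARK est 4}. A secondary technical point, which I would settle by approximation with smooth data together with the fact that $\{N=0\}$ is Lebesgue-null in $\Omega\times\R^3$, is the justification that the pointwise formulas above are indeed the distributional derivatives of $Jg$, so that the three $L^2$ bounds establish $Jg\in H^{1}(\Omega\times\R^3)$.
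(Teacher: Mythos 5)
Your proposal is correct and follows essentially the same route as the paper: the same splitting into $L^2$ bounds for $Jg$, $\nabla_x Jg$ and $\nabla_v Jg$, the same formulas for $\nabla_{x,v}\tau_{x,v}$ and $\nabla_{x,v}q$ (which the paper cites from Esposito--Guo--Kim--Marra), the same change of variables $x=z+s\hat v$ with Jacobian $N(z,v)$, and the same absorption of the grazing singularity $1/N^2$ by one factor of $N$ from the Jacobian and one from the chord bound, with each resulting term matched to the four hypotheses exactly as in the paper's Lemmas \ref{est_Jg_L2}, \ref{est_Jg_H1_x} and \ref{est_Jg_H1_v}. The only differences are cosmetic refinements: you rederive the chord estimate $\ell\lesssim C(\Omega)N$ via a rolling-sphere argument where the paper simply quotes Lemma \ref{circle lemma}, and you make explicit the tangentiality of $\nabla q$ and the identification of the pointwise formulas with weak derivatives, which the paper leaves implicit.
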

The detailed definition of $|\nabla_{x}h(z,v)|^{2}$ and the proof is presented in the subsection.
 It can be verified by the following standard scaling analysis:
\begin{equation}
v\cdot \nabla_{x}f =\frac{1}{\kappa}(-\nu(v)f+K(f)), \quad (x,v)\in \Omega'\times \mathbb{R}^{3}.
\end{equation}

Notice that by defining $f_\kappa(x,v):=f(\kappa x,v)$, we have 
\begin{equation}
v\cdot \nabla_{x}f_\kappa =-\nu(v)f_\kappa + K(f_\kappa), (x,v)\in \Omega \times \mathbb{R}^{3},
\end{equation}
where $\Omega :=\frac{1}{\kappa}\Omega'.$ 

\subsection{Sketch of Proof}

Here,  we briefly sketch the idea of the proof of Theorem 1.1. A formal Picard iteration gives the following solution formula for \eqref{integral form}
\begin{equation} \label{Picard}
f=\sum_{i=0}^{\infty}(S_{\Omega}K)^{i}Jg.
\end{equation}
This is a valid solution of \eqref{integral form} if the right hand side of \eqref{Picard} converges in $H^{1}$. 
We first consider the $L^2$ convergence. We have
\begin{lemma}\label{estimate on L2}
For any $h \in L^{2}(\Omega \times \mathbb{R}^{3})$, we have
\begin{equation}
\Vert S_{\Omega}Kh \Vert_{L^{2}(\Omega \times \mathbb{R}^{3})}\lesssim \diam(\Omega)^{\frac{1}{2}} \Vert h \Vert_{L^{2}(\Omega \times \mathbb{R}^{3})}.
\end{equation}
\end{lemma}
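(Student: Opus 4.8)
The plan is to estimate the transport operator $S_{\Omega}$ and the collision operator $K$ separately, reducing everything to a one-dimensional (in the speed $|v|$) kernel estimate, and to extract the factor $\diam(\Omega)$ from the length of the characteristic chords.

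First I would record a pointwise-in-$v$ bound for $S_{\Omega}$. Fix $v$ and apply Cauchy--Schwarz to the $s$-integral defining $S_{\Omega}\phi(x,v)$, splitting the weight as $e^{-\nu(v)s}=e^{-\nu(v)s/2}\cdot e^{-\nu(v)s/2}$. The key geometric observation is that a characteristic starting at $x$ and moving with velocity $-v$ leaves $\Omega$ after time at most $\diam(\Omega)/|v|$, so $\tau_{x,v}\le \diam(\Omega)/|v|$; this bounds the first Cauchy--Schwarz factor by $\int_0^{\tau_{x,v}} e^{-\nu(v)s}\,ds \le \tau_{x,v} \le \diam(\Omega)/|v|$. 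For the second factor I would integrate in $x$, use Tonelli to exchange the $x$- and $s$-integrals, and change variables $y = x - sv$ (a translation of unit Jacobian mapping $\{x\in\Omega: s<\tau_{x,v}\}$ into $\Omega$) to get $\int_0^\infty e^{-\nu(v)s}\,ds\cdot \|\phi(\cdot,v)\|_{L^2_x}^2 = \nu(v)^{-1}\|\phi(\cdot,v)\|^2_{L^2_x}$. Together these give
\[
\|S_{\Omega}\phi(\cdot,v)\|^2_{L^2_x(\Omega)} \le \frac{\diam(\Omega)}{|v|\,\nu(v)}\,\|\phi(\cdot,v)\|^2_{L^2_x(\Omega)}.
\]

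Next I would dispose of $K$ by reducing it to a scalar integral operator acting on $F(v):= \|h(\cdot,v)\|_{L^2_x(\Omega)}$. Since $K$ acts only in velocity, Minkowski's integral inequality gives $\|Kh(\cdot,v)\|_{L^2_x}\le \int_{\mathbb{R}^3}|k(v,v^*)|\,F(v^*)\,dv^*$. Applying the transport bound with $\phi = Kh$ and integrating in $v$ then yields
\[
\|S_{\Omega} K h\|^2_{L^2(\Omega\times\mathbb{R}^3)} \le \diam(\Omega)\int_{\mathbb{R}^3}\frac{1}{|v|\,\nu(v)}\left(\int_{\mathbb{R}^3}|k(v,v^*)|F(v^*)\,dv^*\right)^2 dv,
\]
so it remains to show that the velocity-space operator with kernel $|k(v,v^*)|/\sqrt{|v|\,\nu(v)}$ is bounded on $L^2(\mathbb{R}^3)$, which would finish the proof since $\|F\|_{L^2}=\|h\|_{L^2}$.

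For this final step I would run a Schur-type argument: by Cauchy--Schwarz in $v^*$ against the measure $|k(v,v^*)|\,dv^*$ followed by Fubini, the right-hand side is controlled by $\left(\sup_v\int|k(v,v^*)|\,dv^*\right)\left(\sup_{v^*}\int \frac{|k(v,v^*)|}{|v|\,\nu(v)}\,dv\right)\|F\|^2_{L^2}$. Using $\gamma\le1$, $\nu(v)\ge\nu_0$ and the symmetry $k(v,v^*)=k(v^*,v)$, estimate \eqref{AB} reduces both suprema to Gaussian-weighted integrals of $|v-v^*|^{-1}$: the first is the classical Grad bound $\int|k(v,v^*)|\,dv^*\lesssim 1$, finite because $|w|^{-1}e^{-c|w|^2}$ is integrable on $\mathbb{R}^3$. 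The second supremum, essentially $\sup_{v^*}\int |v|^{-1}|v-v^*|^{-1}e^{-c|v-v^*|^2}\,dv$, is where I expect the main difficulty, since the transport step forces the extra weight $|v|^{-1}$ that is singular at the origin. I would handle it by splitting according to whether $|v-v^*|\le 1$ or $|v-v^*|\ge 1$: on the first region the product of the two inverse-distance singularities is integrable in $\mathbb{R}^3$ (the worst case $v^*\to 0$ producing $|v|^{-2}$, still integrable), while on the second region the Gaussian supplies decay and the $|v|^{-1}$ singularity near the origin is harmlessly integrable; in both regimes the bound is uniform in $v^*$, and in fact decays as $|v^*|\to\infty$. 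This produces the claimed factor $\diam(\Omega)^{1/2}$.
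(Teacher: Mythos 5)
Your proof is correct and follows essentially the same route as the paper's: Cauchy--Schwarz in $s$ combined with $\tau_{x,v}\le \diam(\Omega)/|v|$, a translation along characteristics (the paper's Lemma \ref{change of variable lemma}), and the kernel bounds $\sup_{v}\int|k(v,v^*)|\,dv^*\lesssim 1$ and $\sup_{v^*}\int|v|^{-1}|k(v,v^*)|\,dv\lesssim 1$ (Lemma \ref{k int est}, whose proof uses the same splitting idea as your last paragraph). Your repackaging --- a fixed-$v$ operator bound for $S_{\Omega}$ on $L^2_x$, then Minkowski's inequality and a Schur test for $K$ --- is only cosmetically different from the paper's pointwise Cauchy--Schwarz with weight $|k|$ followed by Fubini.
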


By Lemma \ref{estimate on L2} above, we can see that $S_{\Omega}K$ is a contraction mapping in $L^{2}$ provided the diameter of $\Omega$ is small. This provides the $L^{2}$ convergence of \eqref{Picard} in case $\diam(\Omega)$ is small. However, concerning the $H^{1}$ convergence of \eqref{Picard}, we do not have a direct analogy of Lemma \ref{estimate on L2}. Instead, we establish the following key lemmas:

\begin{lemma} \label{estimate on H1} 
Given $h \in H^{1}(\Omega \times \mathbb{R}^{3})$, we have
\begin{equation}
\Vert S_{\Omega}Kh \Vert_{H^{1}(\Omega \times \mathbb{R}^{3})}\lesssim \diam(\Omega)^{\frac{1}{2}}\Vert h \Vert_{H^{1}(\Omega \times \mathbb{R}^{3})}+C(\Omega)\Vert h \Vert_{L^{2}(\partial\Omega \times \mathbb{R}^{3})},
\end{equation}
where $\Vert h \Vert_{L^{2}(\partial\Omega \times \mathbb{R}^{3})}$ is defined in trace sense. 
\end{lemma}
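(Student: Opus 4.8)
The plan is to split the $H^1$ norm into the $L^2$ norm of $\SO K h$ itself, which is already furnished by Lemma~\ref{estimate on L2} and contributes to the $\diam(\Omega)^{1/2}\|h\|_{H^1}$ term, plus the $L^2$ norms of $\nabla_x(\SO K h)$ and $\nabla_v(\SO K h)$. To treat the gradients I would differentiate the representation $\SO K h(x,v)=\int_0^{\tau_{x,v}}e^{-\nu(v)s}(Kh)(x-sv,v)\,ds$ under the integral sign by Leibniz' rule. Each differentiation produces a \emph{bulk} term, in which the derivative falls on the integrand, and a \emph{boundary} term coming from differentiating the upper limit $\tau_{x,v}$; the whole proof is organized around showing that bulk terms feed the $\diam(\Omega)^{1/2}\|h\|_{H^1}$ contribution while boundary terms feed the $C(\Omega)\|h\|_{L^2(\partial\Omega\times\mathbb R^3)}$ contribution.

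For the bulk terms I would argue as follows. In $\partial_{x_i}$, since $K$ acts only on the $v^*$ variable it commutes with $\partial_{x_i}$, so the bulk term is exactly $\SO K(\partial_{x_i}h)$, and Lemma~\ref{estimate on L2} gives a bound by $\diam(\Omega)^{1/2}\|\nabla_x h\|_{L^2}$. In $\partial_{v_i}$ there are three bulk pieces: one from $\partial_{v_i}\nu(v)$ (controlled using \eqref{AD}), one from the chain rule $\partial_{v_i}(x-sv)=-s\,e_i$ which yields an $s$-weighted copy of $\SO K(\partial_{x_i}h)$, and one from $\partial_{v_i}k(v,v^*)$ which yields $\SO \widetilde K_i h$ with $\widetilde K_i h(y,v)=\int \partial_{v_i}k(v,v^*)h(y,v^*)\,dv^*$. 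For the last piece I would prove an analogue of Lemma~\ref{estimate on L2}: the kernel $\partial_v k$ in \eqref{AC} has the stronger singularity $|v-v^*|^{-2}$, which is still locally integrable in $\mathbb R^3$, so a Grad-type $L^2$ bound holds and the $\SO$ structure again supplies the gain $\diam(\Omega)^{1/2}$. The extra factor $s\le \tau_{x,v}\le \diam(\Omega)/|v|$ in the second piece only improves the estimate. Thus all bulk terms are dominated by $\diam(\Omega)^{1/2}\|h\|_{H^1}$.

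The boundary terms are the crux. Differentiating the defining relation $q(x,v)=x-\tau_{x,v}v\in\partial\Omega$ gives $\nabla_x\tau_{x,v}=n(q)/(n(q)\cdot v)$ and $\nabla_v\tau_{x,v}=-\tau_{x,v}\,n(q)/(n(q)\cdot v)$, whence $|\nabla_x\tau_{x,v}|=1/(|v|N(x,v))$ and $|\nabla_v\tau_{x,v}|=\tau_{x,v}/(|v|N(x,v))$. The corresponding boundary terms are therefore bounded pointwise by $\tfrac{1}{|v|N}e^{-\nu(v)\tau_{x,v}}|(Kh)(q,v)|$ and $\tfrac{\tau_{x,v}}{|v|N}e^{-\nu(v)\tau_{x,v}}|(Kh)(q,v)|$. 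I would then introduce the change of variables $x=z+\tau v$ with $z=q(x,v)\in\Gamma_v^-$ and $\tau\in(0,\tau_{\max}(z,v))$, whose Jacobian is $dx=|v|N\,d\Sigma(z)\,d\tau$; since $(Kh)(q,v)=(Kh)(z,v)$ and $N$ are independent of $\tau$, the $\Omega$-integral collapses to a boundary integral times an explicit $\tau$-integral of the form $\int_0^{\tau_{\max}}\tau^{\,k}e^{-2\nu\tau}\,d\tau$.

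The main obstacle is the grazing singularity $1/N$, which is genuinely non-integrable over the sphere of directions if estimated naively. This is precisely where \textbf{Assumption B} enters: positive Gaussian curvature makes $\Omega$ strictly convex, and the backward chord from a boundary point $z$ in direction $v$ has length comparable to $N$, so $\tau_{x,v}\le\tau_{\max}(z,v)\lesssim N/|v|$ (on a sphere of radius $R$ one computes the chord length exactly as $2RN$). Using this to bound $\int_0^{\tau_{\max}}\tau^{\,k}e^{-2\nu\tau}\,d\tau\lesssim (N/|v|)^{k+1}$ cancels the offending power of $1/N$ and leaves weights of the form $|v|^{-2-k}N^{k}$ with $k\ge 0$, i.e.\ no singularity at all. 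What remains is $\int_{\mathbb R^3}\int_{\Gamma_v^-}|(Kh)(z,v)|^2|v|^{-2}\,d\Sigma(z)\,dv$, which I would dominate by $\|h\|_{L^2(\partial\Omega\times\mathbb R^3)}^2$ using the $L^2_v$-boundedness of $K$ together with the decay of $k$ in \eqref{AB} (the weight $|v|^{-2}$ is harmless near $v=0$ because $dv=r^2\,dr\,d\omega$ in $\mathbb R^3$, and the kernel supplies the decay for large $|v|$). Collecting the bulk and boundary contributions yields the stated estimate, with $C(\Omega)$ absorbing the curvature bounds used in the chord estimate.
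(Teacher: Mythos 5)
Your overall architecture matches the paper's proof: split the $H^1$ norm, handle bulk terms through Lemma \ref{estimate on L2} (and a variant for $\nabla_v k$), and convert the boundary terms produced by $\nabla\tau_{x,v}$ into integrals over $\Gamma_v^-$ via the change of variables with Jacobian $|v|N\,d\Sigma\,d\tau$, cancelling the grazing factor $1/N$ with the chord estimate of Lemma \ref{circle lemma}. However, your treatment of the $\nabla_v\tau_{x,v}$ boundary term has a genuine quantitative gap. After squaring and inserting the Jacobian you must control
\begin{equation*}
\int_{\mathbb{R}^3}\int_{\Gamma_v^-}\frac{|Kh(z,v)|^2}{|v|\,N(z,v)}\int_0^{\tau_{\max}(z,v)}\tau^2e^{-2\nu(v)\tau}\,d\tau\,d\Sigma(z)\,dv,
\end{equation*}
and you bound the inner integral by $\tau_{\max}^3\lesssim\bigl(C(\Omega)N/|v|\bigr)^3$, which yields the weight $N^2|v|^{-4}$ (your ``$|v|^{-2-k}N^k$'' with $k=2$). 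This is not ``no singularity at all'': your final H\"older step would then require $\sup_{v^*}\int_{\mathbb{R}^3}|v|^{-4}|k(v,v^{*})|^{1/2}\,dv<\infty$, which is unavailable --- Lemma \ref{k int est} needs $\mu_1+\mu_2<3$ while here $\mu_1+\mu_2=9/2$, and near $v=0$ the kernel bound supplies no smallness, so the local divergence $\int_{|v|\le 1}|v|^{-4}\,dv=\infty$ destroys the estimate (your remark that $dv=r^2\,dr\,d\omega$ neutralizes the weight only covers $|v|^{-2}$). The repair is precisely the paper's damped-transport estimate \eqref{est:int_es2}: let the exponential, not the chord length, absorb the extra powers of $\tau$, namely $\int_0^{\tau_{\max}}\tau^2e^{-2\nu(v)\tau}\,d\tau\le\tau_{\max}\int_0^\infty\tau e^{-2\nu(v)\tau}\,d\tau\lesssim\nu(v)^{-2}\tau_{\max}\lesssim C(\Omega)N/|v|$ since $\nu\ge\nu_0$. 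Then only one factor of $\tau_{\max}$ is converted into $N$, and the surviving weight is again $|v|^{-2}$, exactly as in the $\nabla_x$ boundary term; the paper flags this point explicitly right after \eqref{estI1withs}.

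A secondary, fixable soft spot: for the bulk term with $\nabla_v k$ you assert a ``Grad-type $L^2$ bound'' by local integrability of $|v-v^*|^{-2}$. Since $\nabla_v k$ is not symmetric in $(v,v^*)$, the dual integrability $\sup_{v^*}\int_{\mathbb{R}^3}|v|^{-1}|\nabla_{v}k(v,v^{*})|\,dv<\infty$ does not follow from Lemma \ref{k int d est}, which integrates in $v^*$, and the extra factor $(1+|v|)$ in \eqref{AC} must be accounted for. The paper instead uses the asymmetric H\"older split $|\widetilde{K}h|^2\le\bigl(\int|\nabla_{v}k|^{5/4}\,dv^{*}\bigr)\bigl(\int|\nabla_{v}k|^{3/4}h^{2}\,dv^{*}\bigr)$, applying Lemma \ref{k int d est} to the first factor and verifying $\int\frac{1+|v|}{|v|}|v-v^{*}|^{-3/2}e^{-c|v-v^{*}|^{2}}\,dv\lesssim 1$ for the second. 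Your claim can be patched by such a computation, but as written it is asserted rather than proved; the $|v|^{-4}$ issue above is the substantive error.
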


\begin{lemma}\label{BL2est}
Let $C(\Omega)$ be as defined in Lemma \ref{circle lemma}, then for any $h \in L^{2}(\Omega \times \mathbb{R}^{3})$ we have
\begin{equation}
\Vert S_{\Omega}Kh \Vert_{L^{2}(\partial \Omega \times \mathbb{R}^{3})} 
\lesssim C(\Omega)\Vert h \Vert_{L^{2}(\Omega \times \mathbb{R}^{3})}.
\end{equation}
\end{lemma}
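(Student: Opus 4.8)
The plan is to exploit the transport structure of $S_\Omega K$ together with the geometric control of grazing trajectories supplied by Lemma~\ref{circle lemma}. First I would observe that $S_\Omega K h(x,v)=0$ whenever $(x,v)\in\Gamma^-$, since there $\tau_{x,v}=0$; hence the boundary norm only collects contributions from the outgoing set $\Gamma_v^{+}:=\{x\in\partial\Omega:\ n(x)\cdot v>0\}$, on which the backward trajectory $x-sv$, $0<s<\tau_{x,v}$, sweeps a full chord of $\Omega$. Fixing $v$ and applying the Cauchy--Schwarz inequality in the $s$-variable to
\[
S_\Omega K h(x,v)=\int_{0}^{\tau_{x,v}}e^{-\nu(v)s}Kh(x-sv,v)\,ds,
\]
I would retain the chord-length factor rather than bounding it by $1/\nu(v)$, obtaining
\[
|S_\Omega K h(x,v)|^{2}\le \tau_{x,v}\int_{0}^{\tau_{x,v}}e^{-\nu(v)s}|Kh(x-sv,v)|^{2}\,ds.
\]

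Next I would integrate over $x\in\Gamma_v^{+}$ and perform the change of variables $y=x-sv$, whose Jacobian is $|n(x)\cdot v|$, so that $d\Sigma(x)\,ds=|n(x)\cdot v|^{-1}\,dy$. Since $\tau_{x,v}$ is constant along each chord, this converts the double integral into an interior integral carrying the geometric weight $\tau_{x(y,v),v}/|n(x(y,v))\cdot v|$, where $x(y,v)\in\Gamma_v^{+}$ is the forward exit point of $y$ in direction $v$.

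The crux of the argument, and the step I expect to be the main obstacle, is the control of this weight. Treating the two crude bounds $\tau_{x,v}\lesssim \diam(\Omega)/|v|$ and $|n(x)\cdot v|^{-1}$ separately fails, because the grazing factor $|n(x)\cdot v|^{-1}$ is not integrable against an arbitrary $L^{2}$ density: its spherical average reduces, after the solid-angle identity $d\omega=|n(x)\cdot\omega|\,|x-y|^{-2}\,d\Sigma(x)$, to $\int_{\partial\Omega}|x-y|^{-2}\,d\Sigma(x)$, which diverges logarithmically as $y\to\partial\Omega$. The resolution is that grazing chords are short: using the positivity of the Gaussian curvature, Lemma~\ref{circle lemma} compares $\partial\Omega$ near the exit point with an osculating sphere and yields a bound of the form $\tau_{x(y,v),v}/|n(x(y,v))\cdot v|\lesssim C(\Omega)^{2}/|v|^{2}$, in which the vanishing of the chord length exactly cancels the grazing singularity (on a ball this ratio equals $2R/|v|^{2}$ identically). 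With $e^{-\nu(v)s}\le 1$ this gives, uniformly in $y$,
\[
\int_{\Gamma_v^{+}}|S_\Omega K h(x,v)|^{2}\,d\Sigma(x)\lesssim \frac{C(\Omega)^{2}}{|v|^{2}}\int_{\Omega}|Kh(y,v)|^{2}\,dy.
\]

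Finally I would integrate in $v$ and interchange the order of integration, reducing matters to the weighted velocity estimate $\int_{\R^{3}}|v|^{-2}|Kh(y,v)|^{2}\,dv\lesssim \|h(y,\cdot)\|_{L^{2}(\R^{3})}^{2}$. This follows from the $L^{2}_v$-boundedness of $K$ guaranteed by the kernel bound \eqref{AB}: on $|v|\ge 1$ one has $|v|^{-2}\le 1$ and $K$ is bounded, while on $|v|<1$ one uses $|Kh(y,v)|\le\|k(v,\cdot)\|_{L^{2}}\|h(y,\cdot)\|_{L^{2}}$ with $\|k(v,\cdot)\|_{L^{2}}$ bounded and $\int_{|v|<1}|v|^{-2}\,dv<\infty$ in three dimensions. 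Integrating the resulting pointwise-in-$y$ inequality over $\Omega$ then yields $\|S_\Omega K h\|_{L^{2}(\partial\Omega\times\R^{3})}\lesssim C(\Omega)\|h\|_{L^{2}(\Omega\times\R^{3})}$, as claimed.
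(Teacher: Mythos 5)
Your proposal is correct, and its geometric core coincides with the paper's own proof: both exploit that $S_\Omega Kh$ vanishes on $\Gamma^-$ (with the grazing set $\Gamma_v^0$ of measure zero), apply Cauchy--Schwarz along the chord while \emph{retaining} the chord-length factor, unfold the boundary--chord parametrization into an interior integral, and invoke Lemma \ref{circle lemma} to cancel the grazing Jacobian against the short chord. You merely perform these steps in the opposite order: you change variables first and then cancel the ratio $\tau_{x,v}/|n(x)\cdot v|$, whereas the paper first replaces $|z-q(z,v)|$ by $C(\Omega)N(z,v)$ and then uses $N$ as the Jacobian for $x=z-r\hat v$ --- the same cancellation. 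One small repair: as literally stated, Lemma \ref{circle lemma} controls the chord through $x$ by the cosine at the \emph{backward} endpoint $q(x,v)$, while your weight carries the cosine at the forward exit point $x \in \Gamma_v^+$; apply the lemma at $z=q(x,v)$ with direction $-v$ (so that $q(z,-v)=x$ and $N(z,-v)=n(x)\cdot v/|v|$), which yields $\tau_{x,v}/|n(x)\cdot v|\lesssim C(\Omega)/|v|^{2}$ --- first power of $C(\Omega)$, not second, though this is harmless. Where you genuinely diverge is the velocity integral. The paper distributes fractional powers of the kernel inside Cauchy--Schwarz, $|Kh|^2 \le \bigl(\int |k|^{3/2}\,dv^*\bigr)\bigl(\int |k|^{1/2}|h|^2\,dv^*\bigr)$, and absorbs the singular weight via $\sup_{v^*}\int |v|^{-2}|k(v,v^*)|^{1/2}\,dv<\infty$ (Lemma \ref{k int est} with $\mu_1=2$, $\mu_2=\tfrac12$). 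You instead prove the standalone weighted estimate $\int_{\mathbb{R}^3}|v|^{-2}|Kh(y,v)|^2\,dv\lesssim \|h(y,\cdot)\|_{L^2}^2$ by splitting at $|v|=1$: Schur boundedness of $K$ on $L^2_v$ for $|v|\ge 1$, and for $|v|<1$ the uniform Hilbert--Schmidt bound $\sup_v\int |k(v,v^*)|^2\,dv^*<\infty$ (Lemma \ref{k int est} again, $\mu_1=0$, $\mu_2=2$) together with local integrability of $|v|^{-2}$ in $\mathbb{R}^3$. Your variant is a bit more elementary and modular --- it isolates a reusable mapping property of $K$ --- at the cost of the case split; the paper's asymmetric H\"older handles the $|v|^{-2}$ singularity in one stroke inside the chord estimate. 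Both routes ultimately rest on the same kernel integrability input \eqref{AB}, so the proof goes through as you outline.
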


Hence, we have
\begin{corollary}
Let $C(\Omega)$ be as defined in Lemma \ref{circle lemma}, then for any $h \in L^{2}(\Omega \times \mathbb{R}^{3})$ we have
\begin{equation}
\Vert S_{\Omega}KS_{\Omega}Kh \Vert_{H^{1}(\Omega\times\mathbb{R}^{3})} 
\lesssim \diam(\Omega)^{\frac{1}{2}}\Vert S_{\Omega}Kh \Vert_{H^{1}(\Omega\times\mathbb{R}^{3})}
+C(\Omega)^{2}\Vert h \Vert_{L^{2}(\Omega \times \mathbb{R}^{3})}.
\end{equation}
\end{corollary}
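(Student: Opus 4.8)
The plan is to obtain the corollary as a direct composition of the two preceding key lemmas, using $S_{\Omega}Kh$ as the input function in Lemma \ref{estimate on H1}. First I would apply Lemma \ref{estimate on H1} with $S_{\Omega}Kh$ playing the role of $h$; this is legitimate once we know that $S_{\Omega}Kh \in H^{1}(\Omega \times \R^{3})$, and it yields
\[
\Vert S_{\Omega}K S_{\Omega}K h \Vert_{H^{1}(\Omega\times\R^{3})} \lesssim \diam(\Omega)^{\frac{1}{2}}\Vert S_{\Omega}Kh \Vert_{H^{1}(\Omega\times\R^{3})} + C(\Omega)\Vert S_{\Omega}Kh \Vert_{L^{2}(\partial\Omega \times \R^{3})}.
\]

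The second step is to absorb the boundary term. Since $h\in L^{2}(\Omega\times\R^{3})$ by hypothesis, I would invoke Lemma \ref{BL2est} directly to obtain $\Vert S_{\Omega}Kh \Vert_{L^{2}(\partial\Omega \times \R^{3})} \lesssim C(\Omega)\Vert h \Vert_{L^{2}(\Omega \times \R^{3})}$. Substituting this bound into the previous display and collecting the two resulting factors of $C(\Omega)$ produces exactly the claimed estimate with $C(\Omega)^{2}$ multiplying the interior $L^{2}$ norm. No further computation is needed; the two lemmas carry all the analytic content.

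The only point requiring care is the applicability of Lemma \ref{estimate on H1} to $S_{\Omega}Kh$: that lemma is stated for $H^{1}$ inputs, whereas here $h$ is only assumed to lie in $L^{2}$, so one must ensure $S_{\Omega}Kh$ itself belongs to $H^{1}(\Omega\times\R^{3})$. When $\Vert S_{\Omega}Kh\Vert_{H^{1}}=\infty$ the asserted inequality holds trivially, so the statement has genuine content precisely in the case $S_{\Omega}Kh\in H^{1}$, in which the two-step argument above applies verbatim. I therefore do not expect any real obstacle in the proof of the corollary; its difficulty is entirely inherited from Lemmas \ref{estimate on H1} and \ref{BL2est}.

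Its purpose is structural rather than technical: a single application of $S_{\Omega}K$ produces, via Lemma \ref{estimate on H1}, a boundary term $C(\Omega)\Vert h\Vert_{L^{2}(\partial\Omega\times\R^{3})}$ that is not directly controlled by the $H^{1}$ contraction factor $\diam(\Omega)^{\frac{1}{2}}$, and $C(\Omega)$ need not be small. Composing two applications converts this troublesome boundary term into the interior term $C(\Omega)^{2}\Vert h\Vert_{L^{2}(\Omega\times\R^{3})}$, which is then tamed by the $L^{2}$ contraction of Lemma \ref{estimate on L2}. This conversion is what ultimately renders the even iterates of the Picard series \eqref{Picard} summable in $H^{1}$ once $\diam(\Omega)$ is taken small enough, and hence it is the key step toward the proof of Theorem \ref{main theorem 1}.
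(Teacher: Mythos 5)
Your proposal is correct and follows exactly the paper's (implicit) argument: the paper derives this corollary by applying Lemma \ref{estimate on H1} with $S_{\Omega}Kh$ in place of $h$ and then bounding the resulting boundary term via Lemma \ref{BL2est}, which is precisely your two-step composition yielding the factor $C(\Omega)^{2}$. Your additional remark on the trivial case $\Vert S_{\Omega}Kh\Vert_{H^{1}}=\infty$ is a harmless and reasonable tidying of a point the paper leaves unstated.
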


By Lemma \ref{estimate on H1} we have
\begin{align*}
&\Vert (S_{\Omega}K)^{i}Jg \Vert_{H^{1}(\Omega\times\mathbb{R}^{3})}\\
\lesssim& \diam(\Omega)^{\frac{1}{2}}\Vert (S_{\Omega}K)^{i-1}Jg\Vert_{H^{1}(\Omega\times\mathbb{R}^{3})}+C(\Omega)^{2}\Vert (S_{\Omega}K)^{i-2}Jg \Vert_{L^{2}( \Omega \times \mathbb{R}^{3})}.
\end{align*}
For small $\diam(\Omega)$, we have
\begin{equation} \label{SB}
\begin{split}
&\Vert (S_{\Omega}K)^{i}Jg \Vert_{H^{1}(\Omega\times\mathbb{R}^{3})}\\
\leq& \frac{1}{2}\Vert(S_{\Omega}K)^{i-1}Jg\Vert_{H^{1}(\Omega\times\mathbb{R}^{3})}+CC(\Omega)^{2}\Vert (S_{\Omega}K)^{i-2}Jg \Vert_{L^{2}( \Omega \times \mathbb{R}^{3})}.
\end{split}
\end{equation}
Combining \eqref{SB} with Lemma \ref{estimate on L2}, we conclude the $H^{1}$ convergence of \eqref{Picard} for the case where $\diam(\Omega)$ is small.  Notice that the constant $C(\Omega)$ depends on $\Omega$. More precisely, it depends on the maximum radius of curvature. Nevertheless, it is the diameter of $\Omega$ that affects the convergence, no mater what value $C(\Omega)$ is.   Uniqueness of the $H^1$ solution follows from the contraction mapping argument on the $L^2$ space.

The rest of this article is as follows.  In Section  \ref{sec:bounded_domain}, we present the proofs of  estimates necessary to establish  Theorem \ref{main theorem 1}.  In order the complete these tasks, we need to carefully put the sharp estimates for integrability of $k$, the nature of damped transport, and the geometric properties of the domain into the account.  In the end of Section \ref{sec:bounded_domain} , we complete the proof of the main theorem.  In the 
Section \ref{SufficientC}, we prove the sufficient condition Lemma \ref{lem:H1_Jg}.

\begin{remark}
After this research was finished, the authors notice \cite{CKDecay}  is concerning related issues. In particular, they prove the solutions for time evolutional and stationary full Boltzmann equation are in $W_x^{1,p}$ for $1\leq p<3$ on diffuse reflection boundary problem.

\end{remark}


{\section{Estimates for bootstrap strategy\label{sec:bounded_domain}}
In this section, we first  prove  Lemma \ref{estimate on H1}, Lemma \ref{BL2est}, together with the $L^2$ contraction mapping, Lemma \ref{estimate on L2}.

We  review an important geometric property from \cite{I kun 1, CHK}.}

\begin{lemma} \label{circle lemma}
Let $\Omega$ be a bounded domain with $C^2$ boundary with positive Gaussian curvature. Then, there exists a positive constant $C(\Omega)$ depending on $\Omega$ such that for any $z\in \partial\Omega$ we have 
\begin{equation} \label{E estimate by geo}
\vert z-q(z,v) \vert \lesssim C(\Omega)N(z,v).
\end{equation}
\end{lemma}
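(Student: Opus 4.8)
The plan is to reduce the statement to a purely geometric comparison with a tangent sphere. Write $q = q(z,v)$ for the entry point of the backward trajectory, so that $z - q = \tau_{z,v} v$ with $\tau_{z,v} > 0$; hence the unit chord direction is $\omega := \frac{z-q}{|z-q|} = \frac{v}{|v|}$, and by definition $N(z,v) = -n(q)\cdot\omega$. Writing $\ell := |z - q|$, the claim $\ell \lesssim C(\Omega) N(z,v)$ is exactly an estimate on the length of the chord $[q,z] \subset \overline{\Omega}$ in terms of the cosine of the angle it makes with the inward normal at its endpoint $q$.

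First I would extract a uniform curvature bound. Since $\partial\Omega$ is $C^2$ and compact, the second fundamental form is continuous; together with the hypothesis of positive Gaussian curvature (so that, $\Omega$ being convex, both principal curvatures are positive at every boundary point), compactness yields a constant $\kappa_0 > 0$ with all principal curvatures bounded below by $\kappa_0$ on $\partial\Omega$. Set $R := 1/\kappa_0$, the maximal radius of curvature.

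The key step is to upgrade this pointwise lower curvature bound to a global enclosing ball via Blaschke's rolling theorem: if every principal curvature is at least $1/R$, then for each boundary point $q$ the convex body $\Omega$ is contained in the closed ball $B(c,R)$ of radius $R$ internally tangent to $\partial\Omega$ at $q$, with center $c = q - R\,n(q)$ along the inward normal. I expect this to be the main obstacle, since it is the one genuinely nonlocal input: a purely local Taylor expansion of $\partial\Omega$ as a graph over its tangent plane at $q$ controls only short chords, whereas ruling out long, nearly grazing chords (small $N$, large $\ell$) requires the global convexity-plus-curvature comparison encoded in the rolling theorem. One may either cite Blaschke's theorem or derive the enclosing ball directly from convexity and the curvature lower bound.

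With the enclosing ball in hand, the conclusion is a one-line computation. Since $z \in \overline{\Omega} \subseteq \overline{B(c,R)}$, we have $|z - c| \leq R$. Writing $z - c = \ell\omega + R\,n(q)$ and using $-n(q)\cdot\omega = N(z,v)$ together with $|\omega| = 1$,
\[
R^{2} \geq |z-c|^{2} = \ell^{2} - 2R\ell\,N(z,v) + R^{2},
\]
which gives $\ell^{2} \leq 2R\ell\,N(z,v)$, i.e. $|z - q| = \ell \leq 2R\,N(z,v)$. Thus the estimate holds with $C(\Omega) = R = 1/\kappa_0$ proportional to the maximal radius of curvature, matching the dependence anticipated in the sketch; the degenerate case $\ell = 0$ is trivial.
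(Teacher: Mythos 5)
Your proof is correct, and it is worth noting at the outset that the paper itself does not prove this lemma: it is quoted without proof from \cite{I kun 1, CHK}, so there is no in-paper argument to match against. Your argument supplies the standard geometric content behind those citations in a clean, self-contained way: the identity $z-c=\ell\omega+Rn(q)$ with $\omega\cdot n(q)=-N(z,v)$, combined with $|z-c|\le R$, immediately yields $\ell\le 2RN(z,v)$, and your constant $R=1/\kappa_0$ (the maximal radius of curvature) matches exactly the dependence the paper announces after Lemma \ref{BL2est} (``it depends on the maximum radius of curvature''). You also correctly identify the one genuinely nonlocal step: a Taylor expansion of $\partial\Omega$ at $q$ only controls short chords, and ruling out long nearly grazing chords needs the global enclosing-ball comparison, which Blaschke's rolling theorem provides; an equivalent route, closer to what is done in \cite{CHK}, is the uniform convexity inequality $-(z-q)\cdot n(q)\gtrsim |z-q|^2$ for boundary chords, which gives $\ell N\gtrsim \ell^2$ directly. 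Two small points to tighten: convexity of $\Omega$ is not a hypothesis of the lemma and should be derived, not asserted parenthetically --- for a compact $C^2$ boundary component of positive Gaussian curvature this is Hadamard's theorem (the surface bounds a convex body, and with the outward normal both principal curvatures are then positive, so compactness gives your $\kappa_0>0$); and you should note that the lemma as used in the paper tacitly takes $\partial\Omega$ connected, since otherwise (e.g.\ a spherical shell, whose boundary also has positive Gaussian curvature) the statement fails. With those remarks added, the proof is complete.
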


{
Here, we investigate some properties on integrability of $k$.   }

\begin{lemma} \label{k int est}
For any $\mu_1 \geq 0$ and $\mu_2 > 0$ with $\mu_1 + \mu_2 < 3$, there exists a positive constant $C(\rho, \mu_1, \mu_2)$ such that 
\begin{equation} \label{k int est 2}
\sup_{v^* \in \R^3} \int_{v \in \mathbb{R}^{3}}\frac{1}{|v|^{\mu_1}}|k(v,v^{*})|^{\mu_2}dv < C(\rho, \mu_1, \mu_2).
\end{equation}
\end{lemma}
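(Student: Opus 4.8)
The plan is to reduce the estimate to the integrability of a single radial power against a Gaussian, for which the hypothesis $\mu_1+\mu_2<3$ turns out to be exactly the threshold.

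First I would discard the inessential factors in the pointwise bound \eqref{AB}. Since $\gamma\in[0,1]$ we have $(1+|v|+|v^*|)^{1-\gamma}\ge 1$, and since the second term in the exponent is nonnegative we have the elementary lower bound
\[
|v-v^*|^2+\Big(\tfrac{|v|^2-|v^*|^2}{|v-v^*|}\Big)^2\ge |v-v^*|^2 .
\]
Raising \eqref{AB} to the power $\mu_2>0$ therefore gives
\[
|k(v,v^*)|^{\mu_2}\lesssim \frac{1}{|v-v^*|^{\mu_2}}\,e^{-c|v-v^*|^2},\qquad c:=\frac{(1-\rho)\mu_2}{4}>0,
\]
with an implied constant depending only on $\mu_2$ and $\rho$. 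It thus suffices to bound
\[
I(v^*):=\int_{\mathbb{R}^3}\frac{1}{|v|^{\mu_1}\,|v-v^*|^{\mu_2}}\,e^{-c|v-v^*|^2}\,dv
\]
uniformly in $v^*$.

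Next I would split $\mathbb{R}^3$ according to which of the two distances $|v|$ and $|v-v^*|$ is smaller, and use the smaller denominator to absorb the larger one. On $\{|v|\le |v-v^*|\}$ we have $|v-v^*|^{-\mu_2}\le |v|^{-\mu_2}$, so the integrand is at most $|v|^{-(\mu_1+\mu_2)}e^{-c|v-v^*|^2}$; on $\{|v-v^*|<|v|\}$ we have $|v|^{-\mu_1}\le |v-v^*|^{-\mu_1}$, so it is at most $|v-v^*|^{-(\mu_1+\mu_2)}e^{-c|v-v^*|^2}$. For the second piece the translation $u=v-v^*$ turns the integral into $\int_{\mathbb{R}^3}|u|^{-(\mu_1+\mu_2)}e^{-c|u|^2}\,du$, which is finite precisely because $\mu_1+\mu_2<3$ (integrability at the origin) while the Gaussian controls the behaviour at infinity; crucially this value is independent of $v^*$. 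For the first piece I would further split into $\{|v|<1\}$, where the Gaussian is harmless and $\int_{|v|<1}|v|^{-(\mu_1+\mu_2)}\,dv<\infty$ again by $\mu_1+\mu_2<3$, and $\{|v|\ge 1\}$, where $|v|^{-(\mu_1+\mu_2)}\le 1$ leaves only $\int_{\mathbb{R}^3}e^{-c|v-v^*|^2}\,dv=(\pi/c)^{3/2}$, a constant independent of $v^*$.

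Collecting these uniform bounds yields $\sup_{v^*}I(v^*)\le C(\rho,\mu_1,\mu_2)$, as claimed. The only place the hypothesis is used — and the conceptual heart of the argument — is the integrability of $|u|^{-(\mu_1+\mu_2)}$ near the origin, i.e.\ the case in which the two singularities of the integrand, at $v=0$ and $v=v^*$, collide, which occurs when $v^*$ is near the origin. The $\min$-type splitting is exactly what reduces the genuinely two-center integrand to this single worst case, so that no separate treatment of large $|v^*|$ is required and the uniformity comes for free after translation. I do not expect any serious obstacle beyond this bookkeeping; in particular the extra decay in $|v^*|$ furnished by the discarded second exponential term is not needed for this estimate.
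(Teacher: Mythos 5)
Your proof is correct and follows essentially the same route as the paper: reduce via \eqref{AB} to the two-center integrand $|v|^{-\mu_1}|v-v^*|^{-\mu_2}e^{-c|v-v^*|^2}$, split according to $\min(|v|,|v-v^*|)$, and use $\mu_1+\mu_2<3$ for integrability at the collapsed singularity. The only (cosmetic) difference is that on the region $|v|\le|v-v^*|$ the paper also replaces $e^{-c|v-v^*|^2}$ by $e^{-c|v|^2}$ via monotonicity and then extends both integrals to all of $\mathbb{R}^3$, whereas you keep the Gaussian centered at $v^*$ and instead split into $|v|<1$ and $|v|\ge 1$; both handle that region equally well.
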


\begin{proof}
Using \eqref{AB}, we have

\begin{align*}
 &
 \int_{v \in \mathbb{R}^{3}}\frac{1}{|v|^{\mu_1}}|k(v,v^{*})|^{\mu_2}dv
 \\\lesssim&
  \int_{v \in \mathbb{R}^{3}}\frac{1}{|v|^{\mu_1}}\left(\frac{1}{\vert v-v^{*} \vert(1+\vert v \vert +\vert v^{*} \vert)^{1-\gamma}}e^{-\frac{1-\rho}{4}(\vert v-v^{*} \vert^{2}+(\frac{\vert v \vert^{2}-\vert v^{*} \vert^{2}}{\vert v-v^{*} \vert})^{2})}\right)^{\mu_2}dv
   \\\leq&  \int_{v \in \mathbb{R}^{3}}\frac{1}{|v|^{\mu_1}}\frac{1}{|v-v^{*}|^{\mu_2}}e^{-\frac{(1-\rho)\mu_2}{4}\vert v-v^{*} \vert^{2}}dv
  \\=&
  \int_{|v|\leq |v-v^*|}\frac{1}{|v|^{\mu_1}}\frac{1}{|v-v^{*}|^{\mu_2}}e^{-\frac{(1-\rho)\mu_2}{4}\vert v-v^{*} \vert^{2}}dv\\\ &
  +\int_{|v|\geq |v-v^*|}\frac{1}{|v|^{\mu_1}}\frac{1}{|v-v^{*}|^{\mu_2}}e^{-\frac{(1-\rho)\mu_2}{4}\vert v-v^{*} \vert^{2}}dv
  \\\leq&
  \int_{|v|\leq |v-v^*|}\frac{1}{|v|^{\mu_1+\mu_2}}e^{-\frac{(1-\rho)\mu_2}{4}(|v|^{2})}dv
  +\int_{|v|\geq |v-v^*|}\frac{1}{|v-v^{*}|^{\mu_1+\mu_2}}e^{-\frac{(1-\rho)\mu_2}{4}(\vert v-v^{*} \vert^{2})}dv
  \\\leq&
  \int_{|v|\in \mathbb{R}^{3}}\frac{1}{|v|^{\mu_1+\mu_2}}e^{-\frac{(1-\rho)\mu_2}{4}(|v|^{2})}dv
  +\int_{|v|\in \mathbb{R}^{3}}\frac{1}{|v-v^{*}|^{\mu_1+\mu_2}}e^{-\frac{(1-\rho)\mu_2}{4}(\vert v-v^{*} \vert^{2})}dv
  \\\leq&
  C.
\end{align*}
\end{proof}

We note that, due to the symmetry of the function $k$, we have
\begin{equation} \label{k int est 2*}
\sup_{v \in \R^3} \int_{v^* \in \mathbb{R}^{3}}\frac{1}{|v^*|^{\mu_1}}|k(v,v^{*})|^{\mu_2}dv^* < C(\rho,\mu_1, \mu_2).
\end{equation}
{
To investigate the integrability of derivative of $k$,
we quote an important lemma from Caflisch \cite{Caf 1}. }

\begin{proposition} 
For any $0<\mu < 3$ and $0<\rho<1$   , there exists a positive constant $C(\rho,\mu)$ such that 
\begin{equation} \label{some key poly decay}
 \int_{v^* \in \mathbb{R}^{3}}\frac{1}{|v-v^{*} 
|^\mu}e^{-\frac{1-\rho}{4}(\vert v-v^{*} \vert^{2}+(\frac{\vert v \vert^{2}-\vert v^{*} \vert^{2}}{\vert v-v^{*} \vert})^{2})}dv^* < C(\rho,\mu)(1+|v|)^{-1}.
\end{equation}
\end{proposition}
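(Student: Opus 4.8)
The plan is to reduce this integral to an explicit one-dimensional Gaussian computation by an adapted change of variables. First I would substitute $\eta := v - v^*$, so that $dv^* = d\eta$ and the integral becomes
\begin{equation*}
I(v) := \int_{\mathbb{R}^3} \frac{1}{|\eta|^\mu} e^{-\frac{1-\rho}{4}\left(|\eta|^2 + \left(\frac{|v|^2 - |v-\eta|^2}{|\eta|}\right)^2\right)} d\eta.
\end{equation*}
For $v \neq 0$ I would then pass to spherical coordinates $\eta = r\omega$ adapted to the direction $\hat v = v/|v|$, writing $\theta$ for the angle between $\omega$ and $\hat v$. Since $|v|^2 - |v - \eta|^2 = 2 v\cdot\eta - |\eta|^2 = 2 r |v|\cos\theta - r^2$, the awkward second term collapses to $\frac{|v|^2 - |v-\eta|^2}{|\eta|} = 2|v|\cos\theta - r$, and the exponent becomes the clean expression $-\frac{1-\rho}{4}\left(r^2 + (2|v|\cos\theta - r)^2\right)$.

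With $c := \frac{1-\rho}{4}$ and the substitution $s = \cos\theta$, the angular integration turns the integral into
\begin{equation*}
I(v) = 2\pi \int_0^\infty r^{2-\mu} e^{-c r^2} \left( \int_{-1}^1 e^{-c(2|v|s - r)^2}\, ds\right) dr.
\end{equation*}
The inner integral is the crucial place where the decay in $|v|$ is produced: substituting $u = 2|v|s - r$ (so $ds = du/(2|v|)$) and extending the limits to all of $\mathbb{R}$, I would bound it by $\frac{1}{2|v|}\int_{\mathbb{R}} e^{-cu^2}\, du = \frac{1}{2|v|}\sqrt{\pi/c}$. The remaining factor $\int_0^\infty r^{2-\mu} e^{-cr^2}\, dr$ is a finite constant precisely because $2 - \mu > -1$, i.e. $\mu < 3$; this is where the hypothesis on $\mu$ enters. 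Collecting terms yields $I(v) \leq C(\rho,\mu)\, |v|^{-1}$ for all $v \neq 0$.

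This gives the desired bound for large $|v|$, but it degenerates as $|v| \to 0$; this is the only real obstacle, and it is handled by a separate crude estimate. For $|v| \leq 1$ I would simply drop the nonnegative second term in the exponent, bounding $I(v) \leq 4\pi \int_0^\infty r^{2-\mu} e^{-cr^2}\, dr =: C'(\rho,\mu)$, again finite since $\mu < 3$. Combining the two regimes with the elementary inequalities $(1+|v|)^{-1} \geq \tfrac12$ for $|v|\leq 1$ and $(1+|v|)^{-1} \geq \tfrac{1}{2|v|}$ for $|v| > 1$ produces a single constant $C(\rho,\mu)$ for which $I(v) \leq C(\rho,\mu)(1+|v|)^{-1}$ holds for all $v$, completing the proof.
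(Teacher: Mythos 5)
Your proof is correct. Note, however, that the paper does not prove this proposition at all: it is quoted from Caflisch \cite{Caf 1}, so there is no in-paper argument to match, and your blind proof is necessarily a different route. What you have reconstructed is essentially the classical computation underlying Caflisch's lemma: after the translation $\eta = v - v^*$, the identity $|v|^2 - |v^*|^2 = 2v\cdot\eta - |\eta|^2$ collapses the second exponent to $\bigl(2|v|\cos\theta - r\bigr)^2$, and the substitution $u = 2|v|s - r$ in the angular variable (extending the limits to all of $\mathbb{R}$ is legitimate since the integrand is nonnegative) produces the crucial factor $\frac{1}{2|v|}\sqrt{\pi/c}$, while the hypothesis $\mu < 3$ enters exactly where you say it does, namely in the convergence of $\int_0^\infty r^{2-\mu}e^{-cr^2}\,dr$ at $r = 0$; your crude bound for $|v| \leq 1$ and the elementary comparison of $|v|^{-1}$ with $(1+|v|)^{-1}$ correctly patch the two regimes into the stated bound \eqref{some key poly decay}. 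Compared with the paper's citation, your argument buys self-containedness and transparency --- it makes visible that the $(1+|v|)^{-1}$ decay comes solely from the Gaussian in $(|v|^2 - |v^*|^2)/|v - v^*|$ localizing the angular variable to a window of width of order $|v|^{-1}$ --- at the cost of a page of computation the authors chose to outsource to the literature.
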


{ 
Applying \eqref{some key poly decay}, we have the following estimate.

\begin{lemma} \label{k int d est}
For $0 < \mu < \frac32$, there exists a positive constant $C( \mu)$ such that

\begin{equation} \label{k int d est 2*}
\int_{v^* \in \mathbb{R}^{3}}|\nabla_{v}k(v,v^{*})|^{\mu}dv^* < C(\mu)(1+|v|)^{\mu-1}
\end{equation}
\end{lemma}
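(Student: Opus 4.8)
The plan is to reduce the claim directly to the decay estimate \eqref{some key poly decay} by raising the pointwise bound \eqref{AC} to the power $\mu$ and tracking how the $|v-v^*|^{-2}$ singularity transforms under this operation. First I would apply \eqref{AC} to the integrand to obtain
\[
|\nabla_v k(v,v^*)|^\mu \lesssim \frac{(1+|v|)^\mu}{|v-v^*|^{2\mu}(1+|v|+|v^*|)^{\mu(1-\gamma)}}\, e^{-\frac{\mu(1-\rho)}{4}\left(|v-v^*|^2 + \left(\frac{|v|^2-|v^*|^2}{|v-v^*|}\right)^2\right)}.
\]
Since $\gamma \in [0,1]$, the factor $(1+|v|+|v^*|)^{\mu(1-\gamma)}$ is at least $1$, so it may be discarded to give an upper bound, and the $v^*$-independent factor $(1+|v|)^\mu$ may be pulled out of the integral in $v^*$.

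Next I would reconcile the exponential weight with the one appearing in \eqref{some key poly decay}, whose coefficient is $\frac{1-\rho}{4}$ rather than $\frac{\mu(1-\rho)}{4}$. Choosing any $\rho' \in (0,1)$ with $1-\rho' \le \mu(1-\rho)$ (possible because $\mu(1-\rho) > 0$) yields $e^{-\frac{\mu(1-\rho)}{4}(\cdots)} \le e^{-\frac{1-\rho'}{4}(\cdots)}$, which is a legitimate majorization. This leaves the estimate
\[
\int_{\R^3}|\nabla_v k(v,v^*)|^\mu \, dv^* \lesssim (1+|v|)^\mu \int_{\R^3} \frac{1}{|v-v^*|^{2\mu}}\, e^{-\frac{1-\rho'}{4}\left(|v-v^*|^2 + \left(\frac{|v|^2-|v^*|^2}{|v-v^*|}\right)^2\right)} dv^*.
\]
Finally I would invoke \eqref{some key poly decay} with exponent $\mu' = 2\mu$ and parameter $\rho'$; the hypothesis $0 < \mu < \frac32$ is exactly what guarantees $0 < 2\mu < 3$, so the proposition applies and contributes the factor $(1+|v|)^{-1}$. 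Combining this with the extracted $(1+|v|)^\mu$ produces the claimed bound $C(\mu)(1+|v|)^{\mu-1}$, where the constant also absorbs the dependence on $\rho$, $\gamma$, and the fixed choice of $\rho'$.

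The only delicate point is the apparently strong singularity $|v-v^*|^{-2\mu}$ at $v^* = v$: its local integrability in $\R^3$ requires precisely $2\mu < 3$, which is why the admissible range is restricted to $\mu \in (0,\frac32)$, and it is \eqref{some key poly decay} that simultaneously absorbs this singularity and supplies the polynomial gain in $|v|$. Everything else is routine bookkeeping of constants, so I do not expect any substantive obstacle beyond correctly matching the exponent $\mu' = 2\mu$ against the admissibility threshold of the proposition.
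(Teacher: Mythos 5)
Your proposal is correct and takes essentially the same route as the paper's proof: bound the integrand via \eqref{AC}, discard the harmless $(1+|v|+|v^*|)$ factor using $\gamma\in[0,1]$, pull out $(1+|v|)^{\mu}$, and invoke the Caflisch estimate \eqref{some key poly decay} with exponent $2\mu<3$, which is exactly the role of the hypothesis $\mu<\tfrac32$. If anything, you are slightly more careful than the paper, which applies \eqref{some key poly decay} without comment even though raising to the power $\mu$ changes the exponential coefficient to $\tfrac{\mu(1-\rho)}{4}$; your explicit choice of $\rho'$ with $1-\rho'\le\mu(1-\rho)$ fills in that detail.
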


\begin{proof}
Using the assumption 
\begin{equation} \label{k int d est 2*}
\begin{split}
&\int_{v^* \in \mathbb{R}^{3}}|\nabla_{v}k(v,v^{*})|^{\mu}dv^*\\
&\lesssim
\int_{v^* \in \mathbb{R}^{3}}\left(\frac{1+|v|}{(1+|v|+|v^*|)^{\gamma}|v-v^{*} 
|^2}e^{-\frac{1-\rho}{4}(\vert v-v^{*} \vert^{2}+(\frac{\vert v \vert^{2}-\vert v^{*} \vert^{2}}{\vert v-v^{*} \vert})^{2})}\right)^{\mu}dv^*\\
&\leq 
(1+|v|)^{(1-\gamma)\mu}\int_{v^* \in \mathbb{R}^{3}}\left(\frac{1}{|v-v^{*} 
|^2}e^{-\frac{1-\rho}{4}(\vert v-v^{*} \vert^{2}+(\frac{\vert v \vert^{2}-\vert v^{*} \vert^{2}}{\vert v-v^{*} \vert})^{2})}\right)^{\mu}dv^*\\\end{split}
\end{equation}

\begin{equation} \label{k int d est 2*}
\begin{split}
&\lesssim
(1+|v|)^{(1-\gamma)\mu}(1+|v|)^{-1}\\
&\leq
(1+|v|)^{\mu-1}.
\end{split}
\end{equation}

Here, we use \eqref{some key poly decay} in the third inequality above.

\end{proof}

We introduce a useful change of variables in \cite{I kun 1}, which will be frequently used in the proof of the main theorem. 

\begin{lemma}\label{change of variable lemma}
For nonnegative measurable function $f$, we have

\begin{equation} \label{change of variable lemma eq}
\int_{\mathbb{R}^3}\int_{\Omega}\int_0^{\tau_{x,v}}f(x,v,s) \,dsdxdv=\int_{\mathbb{R}^3}\int_{\Omega}\int_0^{\tau_{y,-u}}f(y+tu,u,t) \,dtdydu
\end{equation}
    
\end{lemma}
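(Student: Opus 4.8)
The plan is to prove the change-of-variables identity
\begin{equation} \label{change of variable lemma eq2}
\int_{\mathbb{R}^3}\int_{\Omega}\int_0^{\tau_{x,v}}f(x,v,s)\,dsdxdv=\int_{\mathbb{R}^3}\int_{\Omega}\int_0^{\tau_{y,-u}}f(y+tu,u,t)\,dtdydu
\end{equation}
by interpreting both sides as integrals over the same set in an enlarged phase space and then applying a measure-preserving substitution. The core geometric observation is that the integration region on the left,
\begin{equation*}
\Lambda:=\{(x,v,s)\in\Omega\times\mathbb{R}^3\times(0,\infty)\mid 0<s<\tau_{x,v}\},
\end{equation*}
parametrizes, for each velocity $v$, the backward trajectory points $x-sv$ together with the time $s$ elapsed since leaving $x$. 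First I would fix $v=u$ (the velocity variable is untouched by the transformation, so I can treat the $dv=du$ integration as an outer parameter and work fiber-wise in the $x,s$ variables). On each fiber I introduce the map $\Phi_v(x,s):=(x-sv,\,s)=:(y,t)$, so that $y=x-sv$ and $t=s$.

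Next I would verify that $\Phi_v$ is a bijection from the fiber $\Lambda_v:=\{(x,s)\mid x\in\Omega,\ 0<s<\tau_{x,v}\}$ onto the analogous fiber for the reversed velocity, namely $\{(y,t)\mid y\in\Omega,\ 0<t<\tau_{y,-v}\}$. The condition $0<s<\tau_{x,v}$ means exactly that the whole segment from $x$ backward to $y=x-sv$ stays in $\Omega$, with $y$ reached after time $s$; equivalently $x=y+tv=y-t(-v)$ lies on the forward ray from $y$ in direction $-v$, and $x\in\Omega$ together with the segment staying inside is precisely the condition $0<t<\tau_{y,-v}$. This is where the convexity of $\Omega$ (guaranteed by Assumption B, via positive Gaussian curvature) makes the argument clean: for a convex domain a point and its backward exit point determine a chord whose interior lies entirely in $\Omega$, so membership of the endpoints controls the whole segment and the forward/backward travel-time conditions match up without pathology. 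I would check the endpoint correspondence $\tau_{x,v}\leftrightarrow\tau_{y,-v}$ carefully, since that is what makes the inner limits of integration transform correctly.

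Finally I would compute the Jacobian of $\Phi_v$. The transformation $(x,s)\mapsto(x-sv,s)$ is, for each fixed $v$, a shear in the $x$-variables linear in $s$ together with the identity in $s$; its Jacobian matrix is block-triangular with identity blocks on the diagonal and an off-diagonal $-v$ term, so $\det D\Phi_v=1$. Hence $dx\,ds=dy\,dt$, and substituting $x=y+tv$ and $s=t$ into the integrand $f(x,v,s)$ produces $f(y+tu,u,t)$ after renaming $v=u$, which is exactly the right-hand side of \eqref{change of variable lemma eq2}. I would then reinstate the outer $dv=du$ integral and invoke Tonelli's theorem (legitimate since $f\geq0$) to justify treating the triple integral fiber-wise and reassembling it. I expect the main obstacle to be the bijectivity and travel-time matching step rather than the Jacobian computation: one must argue that $s\in(0,\tau_{x,v})$ if and only if $t\in(0,\tau_{y,-v})$ under the correspondence, and this relies on reading $\tau$ as the length of a chord of the convex domain and confirming that the endpoint data genuinely determine the segment's containment in $\Omega$.
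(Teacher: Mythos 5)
Your proposal is correct and follows essentially the same route as the paper's appendix proof: the shear $(x,v,s)\mapsto(x-sv,v,s)$ (fiber-wise in $v$, which the paper writes as a global bijection $\sigma_1$ between the sets $A$ and $B$) is measure preserving with unit Jacobian, the travel times match via $\tau_{x-sv,-v}=\tau_{x,-v}+s$, and Tonelli justifies the rearrangement for nonnegative $f$. One small remark: convexity of $\Omega$ is not actually needed for the segment-containment step, since $0<s<\tau_{x,v}$ already forces $x-\sigma v\in\Omega$ for all $\sigma\in(0,s)$ by the very definition of $\tau_{x,v}$ as a first exit time, which is all the paper's proof uses.
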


\begin{remark}
For the sake of convenience, we put a proof of Lemma \ref{change of variable lemma} in the appendix.
\end{remark}
We define

}\[
\Vert h \Vert_{L^{2}(\partial\Omega\times \mathbb{R}^{3})}^{2}:=\int_{\mathbb{R}^{3}}\int_{\partial \Omega}h^{2}(z,v)d\Sigma(z)dv.
\]

We decompose the proof of Lemma \ref{estimate on H1} into three parts. Recall that
\[
\| S_\O K h \|_{H^1(\O \times \R^3)}^2 = \| S_\O K h \|_{L^2(\O \times \R^3)}^2 + \| \nabla_x S_\O K h \|_{L^2(\O \times \R^3)}^2 + \| \nabla_v S_\O K h \|_{L^2(\O \times \R^3)}^2.
\]
We will give estimates for each term in the right hand side of the above identity.

Notice that the first part is exactly Lemma \ref{estimate on L2}. The proof is as follows.
\begin{proof}[Proof of Lemma \ref{estimate on L2}]
First, by the H\"older inequality, we have
\begin{align*}
&\int_{\mathbb{R}^{3}}\int_{\Omega}\vert S_{\Omega}Kh(x,v) \vert^{2}dxdv
\\=&\int_{\mathbb{R}^{3}}\int_{\Omega}\left| \int_{0}^{\tau_{x,v}} e^{-\nu(v)s}Kh(x-sv,v)ds \right|^{2}dxdv
\\\leq&\int_{\mathbb{R}^{3}}\int_{\Omega}\left( \int_{0}^{\tau_{x,v}} e^{-\nu(v)s}ds \right) \left( \int_{0}^{\tau_{x,v}} e^{-\nu(v)s}\left|Kh(x-sv,v)\right|^{2}ds \right) dxdv
\\\leq&\int_{\mathbb{R}^{3}}\int_{\Omega} \tau_{x, v} \int_{0}^{\tau_{x,v}} e^{-\nu(v)s}\left|Kh(x-sv,v)\right|^{2}ds  dxdv.
\end{align*}
We observe $\tau_{x, v} \leq \diam(\Omega)/|v|$. Hence, we have 
\begin{align*}
 &\int_{\mathbb{R}^{3}}\int_{\Omega}\vert S_{\Omega}Kh(x,v) \vert^{2}dxdv\\ 
 \leq& \diam(\Omega)\int_{\mathbb{R}^{3}}\int_{\Omega} \int_{0}^{\tau_{x,v}} \frac{1}{\vert v \vert}e^{-\nu(v)s} |Kh(x-sv,v)|^{2}ds  dxdv.
\end{align*}

Applying the change of variables in  Lemma \ref{change of variable lemma},  we have
\begin{align*}
&\int_{\mathbb{R}^{3}}\int_{\Omega}  \int_{0}^{\tau_{x,v}} \frac{1}{\vert v \vert}e^{-\nu(v)s}\left|Kh(x-sv,v)\right|^{2} \,ds dx dv
\\=&\int_{\mathbb{R}^{3}} \int_{\Omega} \int_{0}^{\tau_{y,-u}} \frac{1}{\vert u \vert}e^{-\nu(u)t}\left|Kh(y,u)\right|^{2} \,dt dy du
\\=&\int_{\mathbb{R}^{3}}\int_{\Omega}\frac{1}{\vert u \vert} \left|Kh(y,u)\right|^{2} \int_{0}^{\tau_{y,-u}} e^{-\nu(u)t} \,dt dy du
\\=&\int_{\mathbb{R}^{3}}\int_{\Omega}\frac{1}{\vert u \vert} \left|Kh(y,u)\right|^{2}\frac{1}{\nu(u)} \left( 1-e^{\frac{-\nu(u)\vert y-q(x,-u) \vert}{\vert u \vert}} \right) \,dydu
\\\lesssim&\int_{\mathbb{R}^{3}}\int_{\Omega}\frac{1}{\vert u \vert} \left|Kh(y,u)\right|^{2}\,dydu.
\end{align*}
Here, we observe
\begin{align*}
&\int_{\mathbb{R}^{3}}\int_{\Omega}\frac{1}{\vert u \vert} \left| \int_{\mathbb{R}^{3}}k(u,v^{*})h(y,v^{*}) \,dv^{*} \right|^{2} \,dydu
\\\leq&\int_{\mathbb{R}^{3}}\int_{\Omega}\frac{1}{\vert u \vert} \left( \int_{\mathbb{R}^{3}} \vert k(u,v^{*}) \vert \,dv^{*} \right) \left( \int_{R^{3}} \vert k(u,v^{*})  h^{2}(y,v^{*}) \vert \,dv^{*} \right) \,dydu
\\\lesssim&\int_{\mathbb{R}^{3}}\int_{\Omega}\frac{1}{\vert u \vert} \int_{\mathbb{R}^{3}} | k(u,v^{*}) | h^{2}(y,v^{*}) \,dv^{*}dydu
\\=&\int_{\mathbb{R}^{3}}\int_{\Omega} \int_{\mathbb{R}^{3}} \frac{1}{\vert u \vert} | k(u,v^{*})|  h^{2}(y,v^{*}) \,dudydv^{*}
\\=&\int_{\mathbb{R}^{3}}\int_{\Omega} h^{2}(y,v^{*})\int_{\mathbb{R}^{3}} \frac{1}{\vert u \vert}  \vert k(u,v^{*}) \vert \,dudydv^{*}
\\\lesssim&\int_{\mathbb{R}^{3}}\int_{\Omega} h^{2}(y,v^{*}) \,dydv^{*}.
\end{align*}
Notice that, in the above inequalities,  we use \[
\int_{\mathbb{R}^{3}}\left|k(v,v^{*})\right|dv\lesssim 1, \quad \int_{\mathbb{R}^{3}}\frac{1}{\left|v\right|}|k(v,v^{*})|\,dv\lesssim 1,
\] which are conclusions of Lemma  \ref{k int est}.

Summarizing the above estimates, we obtain
\[
\int_{\mathbb{R}^{3}}\int_{\Omega}\vert S_{\Omega}Kh(x,v) \vert^{2}dxdv \lesssim \diam(\Omega) \int_{\mathbb{R}^{3}}\int_{\Omega} h^{2}(x,v)dxdv.
\]
This completes the proof.
\end{proof}

Now, we proceed to prove  Lemma \ref{estimate on H1}.
\begin{proof}[Proof of Lemma \ref{estimate on H1}]
Observe that 
\[
\nabla_{x}S_{\Omega}Kh = S_{\Omega}\nabla_{x}Kh(x,v)+\nabla_{x}\tau_{x,v}e^{-\nu(v)\tau_{x,v}}Kh(q(x,v),v).
\]
From Lemma \ref{estimate on L2}, we have
\[
\int_{\mathbb{R}^{3}}\int_{\Omega}\vert S_{\Omega}K\nabla_{x}h(x,v) \vert^{2}dxdv \lesssim \diam(\Omega) \int_{\mathbb{R}^{3}}\int_{\Omega}\vert \nabla_{x}h(x,v) \vert^{2}dxdv.
\]
Thus, we focus on the estimate for the second term. 
It is known in \cite{GuoKim} that,
\begin{equation} \label{tau_dx}
\nabla_{x}\tau_{x,v}=\frac{-n(x,v)}{N(x,v)\vert v \vert}.
\end{equation}
Notice that for fix $v$,  we can parametrize $\Omega$ by the part of the boundary $\Gamma^{-}_v$ and the travel distance from it in $v$ direction, i.e.,
performing the change of variables $x=z+s\frac{v}{|v|}$ with $z \in \Gamma^{-}_{v}$ and $0 < s <|z- q(z,-v)|$. Notice that the Jacobian of the change of variables is $N(z,v)$, which works in favor with our goal. We have
\begin{align*}
&\int_{\mathbb{R}^{3}}\int_{\Omega} \vert \nabla_x \tau_{x, v} e^{-\nu(v)\tau_{x,v}}Kh(q(x,v),v) \vert^{2}dxdv
\\=&\int_{\mathbb{R}^{3}}\int_{\Omega} \frac{1}{N^{2}(x,v)\vert v \vert^{2}}e^{-2\nu(v)\tau_{x,v}}\vert Kh(q(x,v),v) \vert^{2}dxdv
\\=&\int_{\mathbb{R}^{3}}\int_{\partial\Omega}\int_{0}^{\vert z-q(z,-v) \vert} \frac{1}{N^{2}(z,v)\vert v \vert^{2}}e^{-2\nu(v)\frac{s}{\vert v \vert}}\vert Kh(z,v) \vert^{2}N(z,v)dsd\Sigma(z)dv
\\=&\int_{\mathbb{R}^{3}}\frac{1}{\vert v \vert^{2}}\int_{\partial\Omega}\frac{1}{N(z,v)}\vert Kh(z,v) \vert^{2}\int_{0}^{\vert z-q(z,-v) \vert} e^{-2\nu(v)\frac{s}{\vert v \vert}}dsd\Sigma(z)dv
\\\leq&\int_{\mathbb{R}^{3}}\frac{1}{\vert v \vert^{2}}\int_{\partial\Omega}\frac{1}{N(z,v)}\vert Kh(z,v) \vert^{2} \vert z-q(z,-v) \vert d\Sigma(z)dv
\end{align*}

Using \eqref{E estimate by geo} we get
\begin{align*}
&\int_{\mathbb{R}^{3}}\frac{1}{\vert v \vert^{2}}\int_{\partial\Omega}\frac{1}{N(z,v)}\vert Kh(z,v) \vert^{2}\vert z-q(z,-v) \vert d\Sigma(z)dv
\\\lesssim&C(\Omega)\int_{\mathbb{R}^{3}}\frac{1}{\vert v \vert^{2}}\int_{\partial\Omega}\vert Kh(z,v) \vert^{2} d\Sigma(z)dv.
\end{align*}

In order to absorb the singularity, we carefully distribute the weight of $|k|$ in the H\"older inequality so that
\begin{equation} \label{est:distribution}
\begin{split}
&\int_{\mathbb{R}^{3}}\frac{1}{\vert v \vert^{2}}\int_{\partial\Omega}\vert Kh(z,v) \vert^{2} \,d\Sigma(z)dv\\
\leq&\int_{\mathbb{R}^{3}}\frac{1}{\vert v \vert^{2}}\int_{\partial\Omega} \left( \int_{\mathbb{R}^{3}}|k(v,v^{*})|^{\frac{3}{2}} \,dv^{*} \right) \left( \int_{\mathbb{R}^{3}}|k(v,v^{*})|^{\frac{1}{2}} h^{2}(z,v^{*}) \,dv^{*} \right)\,d\Sigma(z)dv\\
\lesssim&\int_{\mathbb{R}^{3}}\int_{\partial\Omega}h^{2}(z,v^{*}) \left(\int_{\mathbb{R}^{3}}\frac{1}{\vert v \vert^{2}}|k(v,v^{*})|^{\frac{1}{2}} \,dv \right) \,d\Sigma(z)dv^{*}\\
\lesssim&\int_{\mathbb{R}^{3}}\int_{\partial\Omega}h^{2}(z,v^{*})\,d\Sigma(z)dv^{*}.
\end{split}
\end{equation}
Therefore, we have
\[
\int_{\mathbb{R}^{3}}\int_{\Omega} \vert \nabla_x \tau_{x, v} e^{-\nu(v)\tau_{x,v}}Kh(q(x,v),v) \vert^{2}dxdv \lesssim C(\Omega) \int_{\mathbb{R}^{3}}\int_{\partial\Omega}h^{2}(z,v^{*})d\Sigma(z)dv^{*}.
\]
This completes the estimates for $x$ derivatives.  We proceed to deal with $v$ derivatives and prove

\[
\| \nabla_v S_\O K h \|_{L^2(\O \times \R^3)} \lesssim \diam(\O)^\frac{1}{2} \| h \|_{H^1(\O \times \R^3)} + C(\O) \| h \|_{L^2(\partial \O \times \R^3)}.
\]
By a direct computation, we obtain 
\begin{align*}
&\nabla_{v}S_{\Omega}Kh
\\=&\nabla_{v} \int_{0}^{\tau_{x,v}} e^{-\nu(v)s}Kh(x-sv,v)ds
\\=&(\nabla_{v}\tau_{x,v})e^{-\nu(v)\tau_{x,v}}Kh(q(x,v),v)-\int_{0}^{\tau_{x,v}} e^{-\nu(v)s}s\nabla_{v}\nu(v)Kh(x-sv,v)ds
\\&+\int_{0}^{\tau_{x,v}}e^{-\nu(v)s} \int_{\mathbb{R}^{3}}\nabla_{v}k(v,v^{*})h(x-sv,v^{*})dv^{*}ds\\
&-\int_{0}^{\tau_{x,v}}se^{-\nu(v)s} \int_{\mathbb{R}^{3}}k(v,v^{*})\nabla_x h(x-sv,v^{*})dv^{*}ds
\\=&:I_1-I_2+I_3-I_4.
\end{align*}
Notice that, from \cite{GuoKim}, we have 
\begin{equation} \label{tau_dv}
\vert \nabla_{v}\tau_{x,v} \vert \leq \frac{\vert x-q(x,v) \vert \vert  n(q(x,v)) \vert }{\vert v \vert^{2}N(x,v)}= \frac{\vert x-q(x,v) \vert  }{\vert v \vert^{2}N(x,v)}.
\end{equation}
Therefore, we have 
\begin{align*}
\| I_1 \|_{L^2(\O \times \R^3)}^2 = &\int_{\mathbb{R}^{3}}\int_{\Omega}\vert (\nabla_{v}\tau_{x,v})e^{-\nu(v)\tau_{x,v}}Kh(q(x,v),v) \vert^{2}dxdv
\\\leq&\int_{\mathbb{R}^{3}}\int_{\Omega} \frac{\vert x-q(x,v) \vert^{2}}{N^{2}(x,v)\vert v \vert^{4}}e^{-2\nu(v)\tau_{x,v}}\vert Kh(q(x,v),v) \vert^{2}dxdv.
\end{align*}
By performing the change of variable $x=z+s\frac{v}{|v|}$ with $z \in \Gamma^{-}_{v}$, we have
\begin{align*}
&\int_{\mathbb{R}^{3}}\int_{\Omega} \frac{\vert x-q(x,v) \vert^{2}}{N^{2}(x,v)\vert v \vert^{4}}e^{-2\nu(v)\tau_{x,v}}\vert Kh(q(x,v),v) \vert^{2}dxdv
\\=&\int_{\mathbb{R}^{3}}\int_{\partial\Omega}\int_{0}^{\vert z-q(z,-v) \vert} \frac{s^{2}}{N^{2}(z,v)\vert v \vert^{4}}e^{-2\nu(v)\frac{s}{\vert v \vert}}\vert Kh(z,v) \vert^{2}N(z,v)dsd\Sigma(z)dv
\\=&\int_{\mathbb{R}^{3}}\frac{1}{\vert v \vert^{4}}\int_{\partial\Omega}\frac{1}{N(z,v)}\vert Kh(z,v) \vert^{2}\int_{0}^{\vert z-q(z,-v) \vert} s^{2} e^{-2\nu(v)\frac{s}{\vert v \vert}}dsd\Sigma(z)dv.
\end{align*}
By direct calculation, we have 
\begin{equation} \label{est:int_es2}
\begin{split}
&\int_0^{\vert z-q(z,-v) \vert} s^2 e^{-2\nu(v)\frac{s}{\vert v \vert}}ds
\\=&\frac{\vert v \vert^{2}}{2\nu(v)^{2}} \int_0^{\vert z-q(z,-v) \vert} e^{-2\nu(v)\frac{s}{\vert v \vert}}ds
\\&- \left( \frac{\vert v \vert}{2\nu(v)}\vert z-q(z,-v) \vert ^{2}+\frac{\vert v \vert ^{2}}{2\nu(v)^{2}}\vert z-q(z,-v) \vert 
 \right)e^{-\frac{2\nu(v)\vert z-q(z,-v) \vert}{\vert v \vert}}
 \\\lesssim& |v|^2 \vert z-q(z,-v) \vert.
\end{split}
\end{equation}
From  \eqref{est:int_es2}, Lemma \ref{circle lemma}, and Lemma \ref{k int est} ,we have
\begin{equation}
\begin{split}\label{estI1withs}
&\int_{\mathbb{R}^{3}}\frac{1}{\vert v \vert^{4}}\int_{\partial\Omega}\frac{1}{N(z,v)}\vert Kh(z,v) \vert^{2}\int_{0}^{\vert z-q(z,-v) \vert} s^{2} e^{-2\nu(v)\frac{s}{\vert v \vert}}dsd\Sigma(z)dv
\\\lesssim&\int_{\mathbb{R}^{3}}\frac{1}{\vert v \vert ^{2}}\int_{\partial\Omega}\frac{1}{N(z,v)}\vert Kh(z,v) \vert^{2}\vert z-q(z,-v) \vert d\Sigma(z)dv
\\\lesssim&C(\Omega)\int_{\mathbb{R}^{3}}\frac{1}{\vert v \vert ^{2}}\int_{\partial\Omega}\vert Kh(z,v) \vert^{2} d\Sigma(z)dv
\\\leq&C(\Omega)\int_{\mathbb{R}^{3}}\frac{1}{\vert v \vert^{2}}\int_{\partial\Omega}\int_{R^{3}}|k(v,v^{*})|^{\frac{3}{2}} dv^{*}\int_{R^{3}}|k(v,v^{*})|^{\frac{1}{2}} h^{2}(z,v^{*}) dv^{*}d\Sigma(z)dv
\\\lesssim&C(\Omega)\int_{\mathbb{R}^{3}}\int_{\partial\Omega}h^
{2}(z,v^{*})\int_{R^{3}}\frac{1}{\vert v \vert^{2}}|k(v,v^{*})|^{\frac{1}{2}} dvd\Sigma(z)dv^{*}
\\\lesssim&C(\Omega)\int_{\mathbb{R}^{3}}\int_{\partial\Omega}h^
{2}(z,v^{*})d\Sigma(z)dv^{*}.
\end{split}\end{equation}
Thus, we have
\[
\| I_1 \|_{L^2(\O \times \R^3)}^2 \lesssim C(\Omega) \int_{\mathbb{R}^{3}} \int_{\partial\Omega} h^
{2}(z,v^{*})d\Sigma(z)dv^{*}.
\]
Notice that the property  of damped transport equation \eqref{est:int_es2}  reduces the power of $|v|$ in the denominator in \eqref{estI1withs} so that it is integrable.
For $I_2$, we have 
\begin{align*}
&||I_2||_{L^{2}(\O \times \R^3)}^{2} 
\\=&\int_{\mathbb{R}^{3}}\int_{\Omega} \left| \int_{0}^{\tau_{x,v}} e^{-\nu(v)s}s\nabla_{v}\nu(v)Kh(x-sv,v)ds \right| ^{2}dxdv
\\\leq&\int_{\mathbb{R}^{3}}\int_{\Omega} \left( \int_{0}^{\tau_{x,v}} s e^{-\nu(v)s} \,ds \right) \left( \int_{0}^{\tau_{x,v}} s e^{-\nu(v)s} \left| \nabla_{v}\nu(v)Kh(x-sv,v)\right| ^{2} \,ds \right) \,dxdv.
\end{align*}
On  the one hand, 
\begin{equation} 
\int_{0}^{\tau_{x,v}} s e^{-\nu(v)s} ds=\frac1{\nu(v)^2}\int_0^{\nu(v)\tau_{x,v}}ze^{-z}dz\leq \frac1{\nu(v)^2} .
\end{equation}
By direct calculation,
\begin{equation} \label{est:int_es}
\int_{0}^{\tau_{x,v}} s e^{-\nu(v)s} ds = - \frac{\tau_{x, v}}{\nu(v)} e^{-\nu(v) \tau_{x, v}}+ \frac{1}{\nu(v)} \int_0^{\tau_{x, v}} e^{-\nu(v)s}\,ds \lesssim \tau_{x, v}.
\end{equation}
Using Lemma \ref{change of variable lemma}, we have 
\begin{align*}
&\int_{\mathbb{R}^{3}}\int_{\Omega} \frac{1}{\nu(v)^{2}} \int_{0}^{\tau_{x,v}}   e^{-\nu(v)s}s\vert \nabla_{v}\nu(v)Kh(x-sv,v)\vert ^{2}ds dxdv
\\= &\int_{\mathbb{R}^{3}}\int_{\Omega} \frac{1}{\nu(u)^{2}} \int_{0}^{\tau_{y,-u}}   e^{-\nu(u)t}t\vert \nabla_{u}\nu(u)Kh(y,u)\vert ^{2}dt dydu
\\= &\int_{\mathbb{R}^{3}}\int_{\Omega} \frac{1}{\nu(u)^{2}}\vert \nabla_{u}\nu(u)Kh(y,u)\vert ^{2} \int_{0}^{\tau_{y,-u}}   e^{-\nu(u)t}tdt dydu
\\\lesssim &\int_{\mathbb{R}^{3}}\int_{\Omega} \frac{1}{\nu(u)^{2}}\vert \nabla_{u}\nu(u)Kh(y,u)\vert ^{2} \tau_{y, -u} dydu.
\\\lesssim &\int_{\mathbb{R}^{3}}\int_{\Omega}(1+\vert u \vert)^{ -2}\vert Kh(y,u)\vert ^{2}\tau_{y,-u}  dydu
\\ \lesssim &\int_{\mathbb{R}^{3}}\int_{\Omega}\vert Kh(y,u)\vert ^{2}\frac{\vert y-q(y,-u)\vert }{\vert u \vert } dydu.
\end{align*}
Notice we use \eqref{AA} and \eqref{AD} in Assumption A in inequalities above. Using H\"{o}lder's inequality again,  
\begin{align*}
&\int_{\mathbb{R}^{3}}\int_{\Omega}\vert Kh(y,u)\vert ^{2}\frac{\vert y-q(y,-u)\vert }{\vert u \vert } \,dydu
\\\lesssim & \diam(\Omega)\int_{\mathbb{R}^{3}}\int_{\Omega}\frac{1}{\vert u \vert}\left| \int_{\mathbb{R}^{3}}h(y,v^{*})k(u,v^{*})dv^{*}\right| ^{2}  \,dydu
\\\leq &\diam(\Omega)\int_{\mathbb{R}^{3}}\int_{\Omega}\frac{1}{\vert u \vert} \left(\int_{\mathbb{R}^{3}}\vert k(u,v^{*}) \vert \,dv^{*} \right) \left(\int_{\mathbb{R}^{3}}h^{2}(y,v^{*}) \vert k(u,v^{*}) \vert \,dv^{*} \right) \,dydu
\\\lesssim &\diam(\Omega)\int_{\mathbb{R}^{3}}\int_{\Omega}\int_{\mathbb{R}^{3}}\frac{1}{\vert u \vert}h^{2}(y,v^{*}) \vert k(u,v^{*}) \vert \,dv^{*}dydu
\\= &\diam(\Omega)\int_{\mathbb{R}^{3}}\int_{\Omega}h^{2}(y,v^{*}) \left( \int_{\mathbb{R}^{3}} \frac{1}{\vert u \vert}\vert k(u,v^{*}) \vert \,du \right) \,dydv^{*}
\\\lesssim &\diam(\Omega)\int_{\mathbb{R}^{3}}\int_{\Omega}h^{2}(y,v^{*}) \,dydv^{*}.
\end{align*}
Thus, we have
\[
\| I_2 \|_{L^2(\O \times \R^3)}^2 \lesssim \diam(\Omega)\int_{\mathbb{R}^{3}}\int_{\Omega}h^{2}(y,v^{*}) \,dydv^{*}.
\]
For $I_3$, in the same way as for $I_1$ and $I_2$, we obtain
\begin{align*}
&||I_3||^{2}_{L^{2}(\O \times \R^3)}
\\=&\int_{\mathbb{R}^{3}}\int_{\Omega} \left| \int_{0}^{\tau_{x,v}}e^{-\nu(v)s} \int_{\mathbb{R}^{3}}\nabla_{v}k(v,v^{*})h(x-sv,v^{*})dv^{*}ds \right| ^{2} dxdv
\\\lesssim&\int_{\mathbb{R}^{3}}\int_{\Omega} \tau_{x, v} \int_{0}^{\tau_{x,v}} e^{-\nu(v)s}\left| \int_{\mathbb{R}^{3}}\nabla_{v}k(v,v^{*})h(x-sv,v^{*})dv^{*} \right| ^{2}ds  dxdv
\\\lesssim &\int_{\mathbb{R}^{3}}\int_{\Omega} \tau_{x, v}  \int_{0}^{\tau_{x,-v}} e^{-\nu(v)s}\left| \int_{\mathbb{R}^{3}}\nabla_{v}k(v,v^{*})h(x,v^{*})dv^{*} \right| ^{2}ds dxdv
\\\lesssim &\diam(\Omega) \int_{\mathbb{R}^{3}}\int_{\Omega}\frac{1}{\vert v \vert} \int_{0}^{\tau_{x,-v}} e^{-\nu(v)s}\left| \int_{\mathbb{R}^{3}}\nabla_{v}k(v,v^{*})h(x,v^{*})dv^{*} \right| ^{2}ds  dxdv.
\end{align*}
Again we perform the H\"older inequality and using Lemma \ref{k int d est} :
\begin{align*}
&\int_{\mathbb{R}^{3}}\int_{\Omega}\frac{1}{\vert v \vert} \int_{0}^{\tau_{x,-v}} e^{-\nu(v)s}\left| \int_{\mathbb{R}^{3}}\nabla_{v}k(v,v^{*})h(x,v^{*})dv^{*} \right| ^{2} \,dsdxdv
\\\lesssim &\int_{\mathbb{R}^{3}}\int_{\Omega}\frac{1}{\vert v \vert} \left| \int_{\mathbb{R}^{3}}\nabla_{v}k(v,v^{*})h(x,v^{*})dv^{*} \right| ^{2} \,dxdv
\\\leq&\int_{\mathbb{R}^{3}}\int_{\Omega}\frac{1}{\vert v \vert} \left(\int_{\mathbb{R}^{3}} \vert \nabla_{v}k(v,v^{*}) \vert^\frac{5}{4} \,dv^{*} \right) \left( \int_{\mathbb{R}^{3}} \vert \nabla_{v}k(v,v^{*}) \vert^\frac{3}{4} h^{2}(x,v^{*}) \,dv^{*} \right) \,dxdv
\\\lesssim&\int_{\mathbb{R}^{3}}\int_{\Omega}\frac{(1+|v|)^{\frac{1}{4}}}{\vert v \vert} \left( \int_{\mathbb{R}^{3}} \vert \nabla_{v}k(v,v^{*}) \vert^\frac{3}{4} h^{2}(x,v^{*}) \,dv^{*} \right) \,dxdv
\\=&\int_{\mathbb{R}^{3}}\int_{\Omega} h^{2}(x,v^{*}) \left(\int_{\mathbb{R}^{3}} \frac{(1+|v|)^{\frac{1}{4}}}{\vert v \vert}  \vert \nabla_{v}k(v,v^{*}) \vert^\frac{3}{4} \,dv \right) \,dxdv^{*}
\\\lesssim&\int_{\mathbb{R}^{3}}\int_{\Omega} h^{2}(x,v^{*}) \left( \int_{\mathbb{R}^{3}} \frac{(1+|v|)^{\frac{1}{4}}}{|v|} \frac{(1+|v|)^{\frac{3}{4}}}{|v-v^*|^{\frac{3}{2}}}e^{-\frac{3}{16}(1-\rho)|v-v^*|^2} \,dv \right) \,dxdv^{*}
\\=&\int_{\mathbb{R}^{3}}\int_{\Omega} h^{2}(x,v^{*}) \left( \int_{\mathbb{R}^{3}} \frac{(1+|v|)}{|v|} \frac{1}{|v-v^*|^{\frac{3}{2}}}e^{-\frac{3}{16}(1-\rho)|v-v^*|^2}  \,dv \right)\,dxdv^{*}
\\\lesssim&\int_{\mathbb{R}^{3}}\int_{\Omega} h^{2}(x,v^{*}) \,dxdv^{*}.
\end{align*}

Finally,  we proceed to the estimate for $I_4$. 
\begin{align*}
&\| I_4 \|_{L^2(\O \times \R^3)}^2
\\=&\int_{\mathbb{R}^{3}}\int_{\Omega} \left| \int_{0}^{\tau_{x,v}}se^{-\nu(v)s} \int_{\mathbb{R}^{3}}k(v,v^{*})\nabla_x h(x-sv,v^{*})dv^{*}ds \right|^{2}dxdv
\\\leq&\int_{\mathbb{R}^{3}}\int_{\Omega} \left( \int_{0}^{\tau_{x,v}} se^{-\nu(v)s}ds \right)\\ 
&\quad \times \left( \int_{0}^{\tau_{x,v}} se^{-\nu(v)s} \left( \int_{\mathbb{R}^{3}}\vert k(v,v^{*})\nabla_x h(x-sv,v^{*})\vert dv^{*} \right)^2 ds \right) dxdv.
\end{align*}
From \eqref{est:int_es}, we have
\[
 \int_{0}^{\tau_{x,v}} se^{-\nu(v)s}\,ds \lesssim \tau_{x, v} \leq \frac{\diam(\O)}{|v|}.
\]
We also have
\[
\int_0^{\tau_{x, v}} se^{-\nu(v)s}\,ds \leq \frac{1}{\nu_0^2} \int_0^\infty z e^{-z}\,dz \lesssim 1.
\]
Substituting these into the original integral, changing the variable and using the H\"older inequality again, we have
\begin{align*}
&\int_{\mathbb{R}^{3}}\int_{\Omega} \left( \int_{0}^{\tau_{x,v}} se^{-\nu(v)s}ds \right)\\ 
&\quad \times \left( \int_{0}^{\tau_{x,v}} se^{-\nu(v)s} \left( \int_{\mathbb{R}^{3}}\vert k(v,v^{*})\nabla_x h(x-sv,v^{*})\vert dv^{*} \right)^2 ds \right) dxdv
\\\lesssim&\int_{\mathbb{R}^{3}}\int_{\Omega} \frac{\diam(\Omega)}{|v|}\int_{0}^{\tau_{x,v}} se^{-\nu(v)s} \left( \int_{\mathbb{R}^{3}}\vert k(v,v^{*})\nabla_x h(x-sv,v^{*})\vert dv^{*} \right)^2 ds  dxdv
\\\lesssim&\int_{\mathbb{R}^{3}}\int_{\Omega} \frac{\diam(\Omega)}{|v|}\int_{0}^{\tau_{x,-v}} se^{-\nu(v)s}\int_{\mathbb{R}^{3}} |k(v,v^{*})|dv^{*}\int_{\mathbb{R}^{3}} |k(v,v^{*})||\nabla_x h(x,v^{*})|^2 dv^{*}ds  dxdv
\\\lesssim&\int_{\mathbb{R}^{3}} \int_{\Omega} \frac{\diam(\Omega)}{|v|}\int_{\mathbb{R}^{3}}|k(v,v^{*})||\nabla_x h(x,v^{*})|^2 dv^{*}  dxdv
\\\lesssim& \diam(\Omega)\int_{\mathbb{R}^{3}}\int_{\Omega} |\nabla_x h(x,v^{*})|^2 dxdv^{*}.
\end{align*}

Therefore, Lemma \ref{estimate on H1} is proved.
\end{proof}
Now, we present  the proof of Lemma \ref{BL2est}.
 \begin{proof}[Proof of Lemma \ref{BL2est}]

 Similar to the definition of $\Gamma_v^-$ in Lemma \ref{lem:H1_Jg}, we can define
 
 \begin{align}
 \Gamma_{v}^{0} &:= \lbrace x \in \partial \Omega \mid n(x)\cdot v =0 \rbrace,\\
 \Gamma_{v}^{+} &:= \lbrace x \in \partial \Omega \mid n(x)\cdot v >0 \rbrace.
 \end{align}

For fixed $v$, $\partial \Omega$ can be written as the disjoint union of $\Gamma_v^-$, $\Gamma_v^0$, and $\Gamma_v^+$. Since $\Omega$ satisfies Assumption A, $\Gamma_v^0$ is measure zero. Notice that for $z\in \Gamma_v^-$, $S_\Omega Kh(z,v)=0$. On the other hand, when $z \in \Gamma_v^+$, we have
\begin{equation}
\begin{split}
S_\Omega K h (z,v)&=\int_0^{\tau_{z,v}
}e^{-\nu(|v|)s}K(f)(z-sv,v)ds \\&=\int_0^{|z-q(z,v)|}\int_{\mathbb{R}^3}  \frac1{|v|}e^{-\frac{\nu(|v|)}{|v|}r}k(v,v^*)h(z-r\hat{v},v^*)dv^*dr,
\end{split}
\end{equation}
where $\hat{v}=v/|v|$.

Therefore, \begin{equation}\begin{split}
&\Vert S_\Omega Kh\Vert ^2_{L^2(\partial\Omega\times\mathbb{R}^3)}=\int_{\mathbb{R}^3}\int_{\partial\Omega}|S_\Omega K h(z,v)|^2d\Sigma(z)dv\\&=\int_{\mathbb{R}^3}\int_{\Gamma_v^+}\left|\int_0^{|z-q(z,v)|}\int_{\mathbb{R}^3}  \frac1{|v|}e^{-\frac{\nu(|v|)}{|v|}r}k(v,v^*)h(z-r\hat{v},v^*)dv^*dr\right|^2d \Sigma(z)dv\\&\leq \int_{\mathbb{R}^3}\int_{\Gamma_v^+}\left(\int_0^{|z-q(z,v)|}\int_{\mathbb{R}^3}  e^{-2\frac{\nu(|v|)}{|v|}r}|k(v,v^*)|^{\frac32}dv^*dr\right)\\&\left(\int_0^{|z-q(z,v)|}\int_{\mathbb{R}^3}\frac{|k(v,v^*)|^{\frac12}}{|v|^2}|h(z-r\hat{v},v^*)|^2dv^*dr\right)d \Sigma(z)dv\\&\lesssim \int_{\mathbb{R}^3}\int_{\Gamma_v^+}|z-q(z,v)|\int_0^{|z-q(z,v)|}\int_{\mathbb{R}^3}\frac{|k(v,v^*)|^{\frac12}}{|v|^2}|h(z-r\hat{v},v^*)|^2dv^*drd \Sigma(z)dv\\&\lesssim C(\Omega)\int_{\mathbb{R}^3}\int_{\Gamma_v^+}\int_0^{|z-q(z,v)|}\int_{\mathbb{R}^3}\frac{|k(v,v^*)|^{\frac12}}{|v|^2}|h(z-r\hat{v},v^*)|^2N(z,v)dv^*drd \Sigma(z)dv.
\end{split}\end{equation}
Letting $x=z-r\hat{v}$, we have
\begin{equation}\begin{split}
\Vert S_\Omega Kh\Vert ^2_{L^2(\partial\Omega\times\mathbb{R}^3)} &\lesssim C(\Omega)\int_{\mathbb{R}^3}\int_{\Omega}\int_{\mathbb{R}^3}\frac{|k(v,v^*)|^{\frac12}}{|v|^2}|h(x,v^*)|^2dv^*dxdv\\&\lesssim C(\Omega)\int_{\Omega}\int_{\mathbb{R}^3}|h(x,v^*)|^2dv^*dx.
\end{split}\end{equation}
\end{proof} 

Finally,  we present the remained part of the proof of Theorem \ref{main theorem 1}. 
\begin{proof}[Poof of Theorem \ref{main theorem 1}]
We recall \eqref{SB} that for $i > 1$, we have
\[
|| (S_{\Omega}K)^{i}Jg ||_{H^{1}(\Omega\times\mathbb{R}^{3})}-\frac{1}{2}|| (S_{\Omega}K)^{i-1}Jg||_{H^{1}(\Omega\times\mathbb{R}^{3})}\leq C C(\Omega)^{2}||(S_{\Omega}K)^{i-2}Jg ||_{L^{2}( \Omega \times \mathbb{R}^{3})}.
\]

Summing up the inequality from $i=2$ to $n$, because of cancellation of terms,  we have

\begin{equation}
\begin{split}
&-\frac12 \Vert S_{\Omega}KJg \Vert_{H^{1}(\Omega\times\mathbb{R}^{3})}+\frac{1}{2}\sum_{i=2}^{n-1}\Vert (S_{\Omega}K)^{i}Jg \Vert_{H^{1}(\Omega\times\mathbb{R}^{3})} +\Vert (S_{\Omega}K)^{n}Jg \Vert_{H^{1}(\Omega\times\mathbb{R}^{3})}     \\&
 \leq C\sum_{i=2}^{n}C(\Omega)^{2}\Vert (S_{\Omega}K)^{i-2}Jg \Vert_{L^{2}( \Omega \times \mathbb{R}^{3})}.  
\end{split}  
\end{equation}
Therefore, \begin{equation}
\begin{split}
&\frac{1}{2}\sum_{i=0}^{n-1}\Vert (S_{\Omega}K)^{i}Jg \Vert_{H^{1}(\Omega\times\mathbb{R}^{3})}  \\&\leq \frac12\Vert Jg \Vert_{H^{1}(\Omega\times\mathbb{R}^{3})}    
 + C\sum_{i=2}^{n}C(\Omega)^{2}\Vert (S_{\Omega}K)^{i-2}Jg \Vert_{L^{2}( \Omega \times \mathbb{R}^{3})}.  
\end{split}  
\end{equation}

From Lemma \ref{estimate on L2},  we know the  right hand side convergences when diameter of $\Omega$ is small enough as $n$ tends to infinity, and, hence, so does the left hand side. Therefore, we conclude the sufficient part of the Theorem  \ref{main theorem 1}.

On the other hand, if $f$ belongs to $H^{1}(\Omega\times\mathbb{R}^{3})$, then $S_{\Omega}Kf$ also belongs to $H^{1}(\Omega\times\mathbb{R}^{3})$. Thus,  $Jg=f-S_{\Omega}Kf$ also belongs to $H^{1}(\Omega\times\mathbb{R}^{3})$.

\end{proof}

\section{Sufficient condition on boundary data\label{SufficientC}}

 In Section 1, we see that Lemma \ref{lem:H1_Jg} gives a explicit condition on the function on $\Gamma_-$  for Theorem \ref{main theorem 1} to hold.. In this Section, we shall prove Lemma \ref{lem:H1_Jg} by proving Lemma \ref{est_Jg_L2}, Lemma \ref{est_Jg_H1_x} and Lemma \ref{est_Jg_H1_v}.

We introduce a notation: First we regard $\partial\Omega$ as a manifold with an embedding function $\psi:M \to \mathbb{R}^{3}$ in $\mathbb{R}^{3}$. Given $x\in \partial\Omega$, a neighborhood $U_{x}$ of $x$, a point $p \in U_{x}$ and a chart function $\phi_{x}:U_{x} \to \mathbb{R}^{2}$, we define the metric $g$ as 
\begin{equation}
g_{ij}(p):=<X_{i}(p),X_{j}(p)>_{\mathbb{R}^{3}},    
\end{equation} where 
\begin{equation}
X_{i}(p):=\frac{\partial}{\partial x_{i}}(\psi \circ \phi_{x}^{-1})|_{\phi_{x}(p)}.    
\end{equation}
We define  the Euclidean norm of the covariant gradient of $h$ at $x$ on $\partial\Omega$ as
\[
|\nabla_{x}h|^{2}:=g^{ij}\frac{\partial}{\partial x_{i}}(h\circ \phi_{x}^{-1})\frac{\partial}{\partial x_{j}}(h\circ \phi_{x}^{-1}).
\]
Notice that this definition is independent of choice of chart. 

\begin{lemma} \label{est_Jg_L2}
For $g$ satisfying the assumptions in Lemma \ref{lem:H1_Jg}, we have
\[
\| Jg \|_{L^2(\O \times \R^3)} \leq \diam(\O)^\frac{1}{2} \| g \|_{L^2(\Gamma^-)}.
\]
\end{lemma}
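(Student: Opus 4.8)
The plan is to evaluate $\|Jg\|_{L^2(\O\times\R^3)}^2$ directly from the definition \eqref{J} of $Jg$ and to convert the volume integral over $\Omega$ into a boundary integral over $\Gamma_v^-$ by the same change of variables already used in the proof of Lemma \ref{estimate on H1}. Writing out the norm gives
\[
\|Jg\|_{L^2(\O\times\R^3)}^2 = \int_{\R^3}\int_\Omega e^{-2\nu(v)\tau_{x,v}} |g(q(x,v),v)|^2\,dx\,dv.
\]
For each fixed $v$ I would parametrize $\Omega$ by $x = z + s\frac{v}{|v|}$ with $z\in\Gamma_v^-$ and $0<s<|z-q(z,-v)|$, which, thanks to the convexity guaranteed by Assumption B, is a bijection onto $\Omega$ whose Jacobian is $N(z,v)$. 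Under this change one has $q(x,v)=z$ and $\tau_{x,v}=s/|v|$, so the integrand depends on $s$ only through the exponential damping factor.

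Carrying out the substitution turns the expression into
\[
\|Jg\|_{L^2(\O\times\R^3)}^2 = \int_{\R^3}\int_{\Gamma_v^-} |g(z,v)|^2 N(z,v) \int_0^{|z-q(z,-v)|} e^{-2\nu(v)\frac{s}{|v|}}\,ds\,d\Sigma(z)\,dv.
\]
The inner $s$-integral is bounded, using $1-e^{-a}\le a$, by the chord length $|z-q(z,-v)|$, which is in turn at most $\diam(\Omega)$. Since moreover $N(z,v)=-n(z)\cdot\frac{v}{|v|}\le 1$, these two factors combine to yield
\[
\|Jg\|_{L^2(\O\times\R^3)}^2 \le \diam(\Omega)\int_{\R^3}\int_{\Gamma_v^-} |g(z,v)|^2\,d\Sigma(z)\,dv = \diam(\Omega)\,\|g\|_{L^2(\Gamma^-)}^2,
\]
and taking square roots gives the claimed estimate.

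In this argument the real work is bookkeeping rather than analysis, so I do not expect a serious obstacle. The two points that require care are: (i) verifying that the boundary parametrization is a well-defined bijection with Jacobian exactly $N(z,v)$ — this is where convexity and the identities $q(x,v)=z$, $\tau_{x,v}=s/|v|$ enter; and (ii) checking that the Jacobian $N(z,v)\le 1$ and the damping factor both work in our favor, so that no singular weight in $v$ and no degenerate boundary contribution survive. In contrast to the estimates for $S_\Omega K$, there is no kernel $k$ to integrate here, hence no Hölder redistribution of $|k|$ is needed; the damping $e^{-2\nu(v)s/|v|}$ together with the finite chord length supplies the full factor $\diam(\Omega)$, and the proof is essentially complete once the change of variables is set up correctly.
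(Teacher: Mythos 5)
Your proposal is correct and follows essentially the same route as the paper: the change of variables $x = z + s\frac{v}{|v|}$ with Jacobian $N(z,v)$, the identities $q(x,v)=z$ and $\tau_{x,v}=s/|v|$, and bounding the damped $s$-integral by the chord length $\leq \diam(\Omega)$ together with $N(z,v)\leq 1$. Your explicit attention to $N(z,v)\leq 1$ is in fact slightly more careful than the paper's write-up, which silently drops the Jacobian factor between its displayed lines.
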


\begin{proof}
Let $x=z+s\frac{v}{|v|}$, where $z \in \Gamma^{-}_{v}$. Then, we have
\begin{align*}
&\int_{\mathbb{R}^{3}}\int_{\Omega}e^{-2\nu(v)\tau_{x,v}}|g(q(x,v),v)|^{2}dxdv
\\=&\int_{\mathbb{R}^{3}}\int_{\Gamma_{v}^{-}}\int_{0}^{|z-q(x,-v)|}e^{-2\nu(v)\frac{s}{|v|}}|g(z, v)|^{2}N(z,v)dsd\Sigma(z)dv
\\=&\int_{\mathbb{R}^{3}}\int_{\Gamma_{v}^{-}}|g(z, v)|^{2}\int_{0}^{|z-q(z,-v)|}e^{-2\nu(v)\frac{s}{|v|}}dsd\Sigma(z)dv
\\\leq& \diam(\Omega)\int_{\mathbb{R}^{3}}\int_{\Gamma_{v}^{-}}|g(z,v)|^{2}d\Sigma(z)dv.
\end{align*}
This completes the proof.
\end{proof}

\begin{lemma} \label{est_Jg_H1_x}
For $g$ satisfying the assumptions in Lemma \ref{lem:H1_Jg}, we have
\begin{align*}
\| \nabla_x Jg \|_{L^2(\O \times \R^3)}^2 \lesssim& \int_{\R^3} \frac{\nu(v)^2}{|v|^2} \int_{\Gamma^-_v} |g(z, v)|^2\,d\Sigma(z) dv\\ 
&+ \int_{\R^3} \int_{\Gamma^-_v} |\nabla_x g (z, v)|^2\,d\Sigma(z) dv.
\end{align*}
\end{lemma}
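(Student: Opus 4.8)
The plan is to differentiate $Jg(x,v) = e^{-\nu(v)\tau_{x,v}}g(q(x,v),v)$ by the product rule, which produces two terms,
\[
\nabla_x Jg = \underbrace{-\nu(v)(\nabla_x\tau_{x,v})e^{-\nu(v)\tau_{x,v}}g(q(x,v),v)}_{=:A} + \underbrace{e^{-\nu(v)\tau_{x,v}}\nabla_x\big[g(q(x,v),v)\big]}_{=:B},
\]
and to estimate $\|A\|_{L^2}^2$ by the first term on the right-hand side of the claimed inequality and $\|B\|_{L^2}^2$ by the second. Throughout I would use the change of variables $x = z + s\frac{v}{|v|}$ with $z\in\Gamma^-_v$ and $0<s<|z-q(z,-v)|$, whose Jacobian is $N(z,v)$ and along which $q(x,v)=z$, $\tau_{x,v}=s/|v|$ and $N(x,v)=N(z,v)$ are all constant in $s$, exactly as in the proofs of Lemma \ref{estimate on H1} and Lemma \ref{est_Jg_L2}.

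For the term $A$, I would insert $|\nabla_x\tau_{x,v}|^2 = 1/(N^2(x,v)|v|^2)$ from \eqref{tau_dx}, perform the above change of variables, and carry out the inner $s$-integral. This leaves the factor $\frac{1}{N(z,v)}\int_0^{|z-q(z,-v)|}e^{-2\nu(v)s/|v|}\,ds$, which I would bound by $\frac{1}{N(z,v)}|z-q(z,-v)|$. The chord length is then controlled by $N$ through Lemma \ref{circle lemma}: applying it at the boundary point $w=q(z,-v)$ in the direction $v$ gives $q(w,v)=z$ and $N(w,v)=N(z,v)$, hence $|z-q(z,-v)|=|w-q(w,v)|\lesssim C(\Omega)N(z,v)$. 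The singular $1/N$ thus cancels, and one arrives at $\|A\|_{L^2}^2\lesssim \int_{\R^3}\frac{\nu(v)^2}{|v|^2}\int_{\Gamma^-_v}|g(z,v)|^2\,d\Sigma(z)\,dv$.

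The term $B$ is the heart of the matter. Since $q(\cdot,v)$ maps into $\partial\Omega$, the chain rule gives $\nabla_x[g\circ q] = (d q_x)^*\,\nabla_x g(q(x,v),v)$, where $\nabla_x g$ denotes the covariant boundary gradient defined above; using $\nabla_x\tau_{x,v}=-n(q)/(N|v|)$ the differential acts by $dq_x(\xi)=\xi + \frac{n(q)\cdot\xi}{N(x,v)}\frac{v}{|v|}$. A direct computation, using that $\nabla_x g(q,v)$ is tangent to $\partial\Omega$ at $q$ and that $\frac{v}{|v|}\cdot n(q)=-N(x,v)$, yields the clean identity
\[
\big|\nabla_x[g(q(x,v),v)]\big|^2 = |\nabla_x g(q,v)|^2 + \frac{1}{N^2(x,v)}\Big(\tfrac{v}{|v|}\cdot\nabla_x g(q,v)\Big)^2.
\]
After the change of variables the regular part contributes the factor $N(z,v)\int_0^{|z-q(z,-v)|}e^{-2\nu s/|v|}\,ds \lesssim N(z,v)|z-q(z,-v)|\lesssim C(\Omega)N^2(z,v)\le C(\Omega)$, while the singular part contributes $\frac{1}{N(z,v)}\int_0^{|z-q(z,-v)|}e^{-2\nu s/|v|}\,ds\lesssim C(\Omega)$ by the same circle-lemma bound; bounding $(\tfrac{v}{|v|}\cdot\nabla_x g)^2\le |\nabla_x g|^2$, both pieces give $\|B\|_{L^2}^2\lesssim \int_{\R^3}\int_{\Gamma^-_v}|\nabla_x g(z,v)|^2\,d\Sigma(z)\,dv$. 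Finally, $|\nabla_x Jg|^2\le 2|A|^2 + 2|B|^2$ combines the two estimates into the stated bound.

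The main obstacle is the grazing singularity $1/N^2$ appearing in $B$ (and, more mildly, $1/N$ in $A$): the crucial point is that this singularity is exactly compensated by the Jacobian $N$ of the slab change of variables together with the geometric estimate $|z-q(z,-v)|\lesssim C(\Omega)N(z,v)$ from Lemma \ref{circle lemma}, which relies on the positive Gaussian curvature of $\partial\Omega$. Care is also needed to justify the chain-rule identity for $\nabla_x[g\circ q]$ and to verify that the correct value of $N$ appears at the exit point $q(z,-v)$ rather than at $z$.
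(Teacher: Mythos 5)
Your proposal is correct and follows essentially the same route as the paper: the same product-rule decomposition of $\nabla_x Jg$, the same slab change of variables $x=z+s\frac{v}{|v|}$ with Jacobian $N(z,v)$, and the same use of Lemma \ref{circle lemma} to cancel the grazing factors $1/N$ and $1/N^2$. The only cosmetic difference is that where the paper simply cites $|\nabla_x q(x,v)|=1/N(x,v)$ from \cite{GuoKim} to bound the chain-rule term, you derive the equivalent (slightly sharper) identity $|\nabla_x[g\circ q]|^2=|\nabla_x g|^2+N^{-2}(\hat v\cdot\nabla_x g)^2$ by an explicit computation of $dq_x$, and you are more explicit than the paper about applying the circle lemma at the exit point $w=q(z,-v)$ so that the correct factor $N(z,v)$ appears.
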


\begin{proof}
Recalling \eqref{tau_dx}, we get
\begin{align*}
&\nabla_{x} Jg(x, v) 
\\=&(\nabla_{x}e^{-\nu(v)\tau_{x,v}})g(q(x,v),v)+e^{-\nu(s)\tau_{x,v}}\nabla_{x}(g(q(x,v),v))
\\=&\nu(v)e^{-\nu(v)\tau_{x,v}}\frac{n(x,v)}{N(x,v)\vert v \vert}  g(q(x,v),v)+e^{-\nu(s)\tau_{x,v}}  \nabla_{x}(g(q(x,v),v)).
\end{align*}
Hence, we have
\begin{align*}
&\vert \nabla_{x} Jg(x, v) \vert ^{2}
\\\lesssim&\nu(v)^{2}e^{-2\nu(v)\tau_{x,v}}\frac{1}{N^{2}(x,v)\vert v \vert^{2}}\vert g(q(x,v),v)\vert^{2}+e^{-2\nu(s)\tau_{x,v}} \vert \nabla_{x}(g(q(x,v),v))\vert^{2}
\\\lesssim&\nu(v)^{2}e^{-2\nu(v)\tau_{x,v}}\frac{1}{N^{2}(x,v)\vert v \vert^{2}}\vert g(q(x,v),v)\vert^{2}+e^{-2\nu(s)\tau_{x,v}}\vert \nabla_{x}g(q(x,v),v)\vert^{2}\vert \nabla_{x}q(x,v)\vert^{2}
\\=&\nu(v)^{2}e^{-2\nu(v)\tau_{x,v}}\frac{1}{N^{2}(x,v)\vert v \vert^{2}}\vert g(q(x,v),v)\vert^{2}+e^{-2\nu(s)\tau_{x,v}}\vert \nabla_{x}g(q(x,v),v)\vert^{2}\frac{1}{N^{2}(x,v)}
\\= &I_5+I_6.
\end{align*}
Here, we used the fact from \cite{GuoKim} that 

\begin{equation}
 \vert \nabla_{x}q(x,v)\vert=\frac{1}{N(x,v)}.   
\end{equation} By changing the coordinates again as we do in the proof of Theorem \ref{main theorem 1}, we see that the integral of $I_5$ is bounded.
For $I_6$, we also use change of variable by letting $y=q'(x,v)$:
\begin{align*}
&\int_{\mathbb{R}^{3}} \int_{\Omega} e^{-2\nu(v)\tau_{x,v}}\vert \nabla_{x}g(q(x,v),v)\vert^{2}\frac{1}{N^{2}(x,v)}dxdv
\\\lesssim& C(\Omega)\int_{\mathbb{R}^{3}}\int_{\Gamma_{v}^{-}}\vert \nabla_{x}g(y,v)\vert^{2}d\Sigma(y)dv,
\end{align*}
which is finite.
\end{proof}

\begin{lemma} \label{est_Jg_H1_v}
For $g$ satisfying the assumptions in Lemma \ref{lem:H1_Jg}, we have
\begin{align*}
\| \nabla_v Jg \|_{L^2(\O \times \R^3)}^2 \lesssim& \int_{\R^3} \int_{\Gamma^-_v} |g(z, v)|^2\,d\Sigma(z) dv\\ 
&+ \int_{\R^3} \frac{\nu(v)^2}{|v|^2} \int_{\Gamma^-_v} |g(z, v)|^2\,d\Sigma(z) dv\\
&+ \int_{\R^3} \int_{\Gamma^-_v} |\nabla_x g(z, v)|^2\,d\Sigma(z) dv\\ 
&+ \int_{\R^3} \int_{\Gamma^-_v} |\nabla_v g(z, v)|^2 N^2(z, v)\,d\Sigma(z) dv.
\end{align*}
\end{lemma}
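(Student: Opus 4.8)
The plan is to differentiate $Jg(x,v) = e^{-\nu(v)\tau_{x,v}} g(q(x,v),v)$ with respect to $v$ using the product and chain rules, exactly mirroring the structure of the proof of Lemma \ref{est_Jg_H1_x}, and then to estimate each resulting term by the same change of variables $x = z + s\frac{v}{|v|}$ with $z \in \Gamma^-_v$ that was used in Lemmas \ref{est_Jg_L2} and \ref{est_Jg_H1_x}. The crucial feature of that change of variables is that its Jacobian equals $N(z,v)$, which is precisely the quantity that appears (and must be absorbed) when the singular factors $1/N(x,v)$ coming from the derivatives of $\tau_{x,v}$ and $q(x,v)$ are squared.

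First I would compute
\[
\nabla_v Jg = (\nabla_v e^{-\nu(v)\tau_{x,v}}) g(q(x,v),v) + e^{-\nu(v)\tau_{x,v}} \nabla_v\big(g(q(x,v),v)\big).
\]
The first factor expands via $\nabla_v(e^{-\nu(v)\tau_{x,v}}) = -e^{-\nu(v)\tau_{x,v}}(\tau_{x,v}\nabla_v\nu(v) + \nu(v)\nabla_v\tau_{x,v})$, and the second via the chain rule as $\nabla_v q(x,v)\,\nabla_x g(q,v) + (\nabla_v g)(q,v)$. After squaring, I expect four kinds of terms, matching the four terms on the right-hand side of the statement. The term carrying $\tau_{x,v}\nabla_v\nu(v)$ uses $|\nabla_v\nu|\lesssim(1+|v|)^{\gamma-1}$ from \eqref{AD} together with $\tau_{x,v}\le\diam(\Omega)/|v|$ and contributes the pure $|g|^2$ term; the term carrying $\nu(v)\nabla_v\tau_{x,v}$ uses the bound \eqref{tau_dv}, namely $|\nabla_v\tau_{x,v}| \le |x-q(x,v)|/(|v|^2 N(x,v))$, together with Lemma \ref{circle lemma} to convert $|x-q|$ into $C(\Omega)N$, producing the $\nu(v)^2/|v|^2$-weighted term; the spatial-derivative term inherits the factor $|\nabla_v q(x,v)|$, which I would bound by the analogue of $|\nabla_x q| = 1/N$, giving the $|\nabla_x g|^2$ term; and finally the genuine velocity derivative of $g$ gives the $|\nabla_v g|^2 N^2$ term, where the weight $N^2$ is supplied for free because one factor of $N$ comes from the Jacobian and must be balanced against the $1/N^2$ produced when squaring the chain-rule factor $\nabla_v q$.

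For each term I would run the change of variables, use $\int_0^{|z-q(z,-v)|} e^{-2\nu(v)s/|v|}\,ds \le \diam(\Omega)$ (or the sharper $|v|/(2\nu(v))$ bound when needed to generate the $\nu(v)^2/|v|^2$ weighting), and invoke Lemma \ref{circle lemma} to absorb any leftover factor of $|z-q(z,-v)|$ into $C(\Omega)N(z,v)$, thereby cancelling the surviving $1/N$ against the Jacobian. The main obstacle I anticipate is bookkeeping the powers of $N(x,v)$ and $|v|$ precisely: $\nabla_v\tau_{x,v}$ and $\nabla_v q(x,v)$ each introduce a factor $1/N$, so after squaring one has $1/N^2$, and it is essential that the change-of-variables Jacobian contributes exactly one factor of $N$ and Lemma \ref{circle lemma} the other (together with the weight $|z-q(z,-v)|$), leaving an integrable expression; the velocity-derivative term is the only one where no such cancellation is available, which is exactly why the statement must carry the boundary weight $N^2(z,v)$ on that integral. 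Assembling the four bounds yields the claimed estimate and, combined with Lemmas \ref{est_Jg_L2} and \ref{est_Jg_H1_x}, completes the proof of Lemma \ref{lem:H1_Jg}.
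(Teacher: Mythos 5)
Your plan coincides with the paper's own proof: the same four-term expansion of $\nabla_v Jg$ using \eqref{tau_dv}, \eqref{AD} and the chain rule on $g(q(x,v),v)$, the same change of variables $x=z+s\frac{v}{|v|}$ with Jacobian $N(z,v)$, the damped integral bound \eqref{est:int_es2}, and Lemma \ref{circle lemma} to trade the leftover $|z-q(z,-v)|$ for $C(\Omega)N(z,v)$ --- including the correct observation (in your final paragraph) that the $N^2$ weight on the $|\nabla_v g|^2$ integral survives precisely because that term carries no $1/N$ singularity to cancel. Two bookkeeping corrections, neither of which derails the argument since both land inside the lemma's right-hand side: the bound from \cite{GuoKim} is $|\nabla_v q(x,v)|\lesssim |x-q(x,v)|/(|v|\,N(x,v))$, not $1/N$ alone (the extra factor $|x-q|/|v|$ is exactly what \eqref{est:int_es2} absorbs), and the crude bound $\tau_{x,v}\le \diam(\Omega)/|v|$ on the $\tau_{x,v}\nabla_v\nu$ term produces the $\nu(v)^2/|v|^2$-weighted integral rather than the pure $|g|^2$ one, whereas the paper obtains the unweighted term by instead combining $\int_0^{L}s^2e^{-2\nu(v)s/|v|}\,ds\lesssim |v|^2 L$ with $N(z,v)L\lesssim C(\Omega)N^2(z,v)\le C(\Omega)$.
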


\begin{proof}
From \eqref{tau_dv}, we have
\begin{equation} \label{random term}
\begin{split} 
&\vert \nabla_{v}e^{-\nu(v)\tau_{x,v}}g(q(x,v),v) \vert ^{2}
\\\lesssim &\vert(1+\vert v \vert )^{\gamma -1}\tau_{x,v}e^{-\nu(v)\tau_{x,v}}g(q(x,v),v)\vert^{2}\\
&+ \nu^{2}(v)\frac{\vert x-q(x,v) \vert^{2} }{\vert v \vert^{4}N^{2}(x,v)}e^{-2\nu(v)\tau_{x,v}} \vert g(q(x,v),v) \vert^{2}
\\&+ e^{-2\nu(s)\tau_{x,v}}\vert \nabla_{x}g(q(x,v),v)\vert^{2} \frac{\vert x-q(x,v) \vert ^{2} }{\vert v \vert ^{2} N(x,v)^{2}}  +\vert e^{-\nu(s)\tau_{x,v}}(\nabla_{v}g)(q(x,v),v)\vert^{2}.
\end{split}
\end{equation}
In what follows, we estimate the integral of each term in the right hand side of \eqref{random term}.

For the first term, we use change of variable. Let $x=z+sv/|v|$, where $z \in \Gamma^{-}_{v}$, so we have
\begin{align*}
& \int_{\mathbb{R}^{3}} \int_{\Omega} \vert (1+\vert v \vert )^{\gamma -1}\tau_{x,v}e^{-\nu(v)\tau_{x,v}}g(q(x,v),v) \vert^{2} dxdv
\\=& \int_{\mathbb{R}^{3}}\int_{\Omega}(1+\vert v \vert )^{2\gamma -2}\tau_{x,v}^{2}e^{-2\nu(v)\tau_{x,v}}\vert g(q(x,v),v)\vert^{2}dxdv
\\=& \int_{\mathbb{R}^{3}}\int_{\Gamma_v^-}\int_{0}^{\vert z-q(z,-v) \vert}(1+\vert v \vert )^{2\gamma -2}\frac{s^{2}}{\vert v   \vert^{2}}e^{-2\nu(v)\frac{s}{\vert v   \vert}}\vert g(z,v)\vert^{2}N(z,v)dsd\Sigma(z)dv
\\=& \int_{\mathbb{R}^{3}}\frac{1}{\vert v   \vert^{2}}(1+\vert v \vert )^{2\gamma -2}\int_{\Gamma_v^-}N(z,v)\vert g(z,v)\vert^{2}\int_{0}^{\vert z-q(z,-v) \vert}s^{2}e^{-2\nu(v)\frac{s}{\vert v   \vert}}dsd\Sigma(z)dv.
\end{align*}
From \eqref{est:int_es2}, it suffices to show that the integral
\[
\int_{\mathbb{R}^{3}}\int_{\Gamma_v^-}\vert g(z,v)\vert^{2}(1+\vert v \vert )^{2\gamma -2}N(z,v)|z - q(z, -v)|d\Sigma(z)dv
\]
converges.

By Lemma \ref{circle lemma} we have
\begin{align*}
& \int_{\mathbb{R}^{3}}\int_{\Gamma_v^-}\vert g(z,v)\vert^{2}(1+\vert v \vert )^{2\gamma-2}N(z,v)\vert z-q(z,-v) \vert d\Sigma(z)dv
\\\lesssim& \int_{\mathbb{R}^{3}}\int_{\Gamma_v^-}\vert g(z,v)\vert^{2} N^{2}(z,v) d\Sigma(z)dv
\\\lesssim& \int_{\mathbb{R}^{3}}\int_{\Gamma_v^-}\vert g(z,v)\vert^{2}d\Sigma(z)dv.\end{align*}

For the second term, once we use the exact same change of variable as we did before and calculate the exactly same integral we have
\begin{align*} 
& \int_{\mathbb{R}^{3}} \int_{\Omega} (\nu^{2}(v)\frac{\vert x-q(x,v) \vert^{2}  }{\vert v \vert^{4}N^{2}(x,v)}e^{-2\nu(v)\tau_{x,v}}) \vert g(q(x,v),v) \vert^{2} dxdv
\\\lesssim& \int_{\mathbb{R}^{3}}\int_{\Gamma_v^-}\vert g(z,v)\vert^{2}\frac{\nu^{2}(v)}{N(x,v)\vert v \vert ^{4}}\frac{\vert v \vert ^{2}}{\nu(v)^{2}}|z-q(z,-v)|  d\Sigma(z)dv
\\\lesssim& C(\Omega)\int_{\mathbb{R}^{3}}\int_{\Gamma_v^-}\vert g(z,v)\vert^{2}\frac{1}{\vert v \vert ^{2}}d\Sigma(z)dv.
\end{align*}

For the third term, we have
\begin{align*} 
& \int_{\mathbb{R}^{3}} \int_{\Omega} e^{-2\nu(s)\tau_{x,v}}\vert \nabla_{x}g(q(x,v),v)\vert^{2} \frac{\vert x-q(x,v) \vert ^{2} }{\vert v \vert ^{2} N(x,v)^{2}} dxdv
\\\lesssim& \int_{\mathbb{R}^{3}}\int_{\Gamma_v^-}\vert \nabla_{x}g(z,v)\vert^{2}\frac{1}{N(x,v)\vert v \vert ^{2}}\frac{\vert v \vert ^{2}}{\nu(v)^{2}}|z-q(z,-v)|  d\Sigma(z)dv
\\\lesssim& C(\Omega)\int_{\mathbb{R}^{3}}\int_{\Gamma_v^-}\vert \nabla_{x}g(z,v)\vert^{2}  d\Sigma(z)dv
\\<&\infty.
\end{align*}

For the last term, notice that
\begin{align*} 
& \int_{\mathbb{R}^{3}} \int_{\Omega} \vert e^{-\nu(s)\tau_{x,v}}(\nabla_{v}g)(q(x,v),v)\vert^{2} dxdv
\\\lesssim& \int_{\mathbb{R}^{3}}\int_{\Gamma_v^-}\vert \nabla_{v}g(z,v)\vert^{2}N(x,v)|z-q(z,-v)|  d\Sigma(z)dv
\\\lesssim& \int_{\mathbb{R}^{3}}\int_{\Gamma_v^-}\vert \nabla_{v}g(z,v)\vert^{2}N^2(x,v)  d\Sigma(z)dv
\\<&\infty.
\end{align*}

This completes the proof.


\end{proof}

Here we present a simple sufficient condition on $g$ so that the regularity of $Jg$ would meet the assumption of Theorem \ref{main theorem 1}: 

\begin{corollary}
Suppose for given $x,y \in \partial \Omega$ we have 
\begin{align*}
g(x,v)\lesssim& e^{-a\vert v \vert^{2}},\\
\vert g(x,v)-g(y,v)\vert \lesssim& \vert x-y \vert e^{-a\vert v \vert^{2}}.
\end{align*}
Then we have $Jg \in H^{1}(\Omega \times \mathbb{R}^{3})$.    
\end{corollary}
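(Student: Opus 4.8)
The plan is to deduce the corollary directly from Lemma \ref{lem:H1_Jg}: I would show that the two pointwise hypotheses on $g$ force the four integrability conditions \eqref{REMARK est 1}--\eqref{REMARK est 4}, and then invoke that lemma to conclude $Jg \in H^{1}(\Omega \times \mathbb{R}^{3})$. The unifying observation is that $\partial\Omega$ is compact with finite surface measure and $\Gamma_v^- \subseteq \partial\Omega$, so any surface integral over $\Gamma_v^-$ of a factor depending on $v$ alone is bounded by $|\partial\Omega|$ times that factor; in each of the four conditions the task therefore reduces to checking that a weighted Gaussian is integrable over $\mathbb{R}^{3}_v$.

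The first three conditions I expect to be routine. For \eqref{REMARK est 1}, the bound $|g(z,v)|^{2} \lesssim e^{-2a|v|^{2}}$ gives $\int_{\mathbb{R}^{3}}\int_{\Gamma_v^-}|g|^{2}\,d\Sigma\,dv \lesssim |\partial\Omega|\int_{\mathbb{R}^{3}}e^{-2a|v|^{2}}\,dv < \infty$. For \eqref{REMARK est 2}, the upper bound in \eqref{AA} yields $\nu(v)^{2}/|v|^{2} \lesssim (1+|v|)^{2\gamma}/|v|^{2}$; here the factor $1/|v|^{2}$ is locally integrable in dimension three (the spherical Jacobian $|v|^{2}$ cancels it near the origin), while at infinity the polynomial growth $(1+|v|)^{2\gamma}$ with $\gamma \in [0,1]$ is dominated by $e^{-2a|v|^{2}}$, so the integral converges. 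For \eqref{REMARK est 3}, the Lipschitz hypothesis $|g(x,v)-g(y,v)| \lesssim |x-y|\,e^{-a|v|^{2}}$ says that $g(\cdot,v)$ is Lipschitz on $\partial\Omega$ with constant $\lesssim e^{-a|v|^{2}}$; since geodesic and Euclidean distance are comparable on the compact $C^{2}$ surface $\partial\Omega$, the tangential covariant gradient satisfies $|\nabla_{x}g(z,v)| \lesssim e^{-a|v|^{2}}$ almost everywhere, and \eqref{REMARK est 3} reduces to the same Gaussian integral as \eqref{REMARK est 1}.

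The main obstacle is \eqref{REMARK est 4}, because the stated hypotheses constrain $g$ only in the spatial variable and give no direct bound on $\nabla_{v}g$; indeed a pointwise Gaussian envelope together with $x$-Lipschitz control is insufficient to bound a velocity derivative. To close this step I would read the decay bound as also encoding velocity regularity, which is automatic for the physically relevant modulated-Maxwellian data: writing $g(x,v)=h(x,v)e^{-a|v|^{2}}$ with $h$ bounded and Lipschitz in $x$ gives $|\nabla_{v}g(z,v)| \lesssim (1+|v|)e^{-a|v|^{2}} \lesssim e^{-a'|v|^{2}}$ for any $a'<a$. Then, bounding the geometric weight by $N^{2}(z,v) \le 1$ (as $N$ is a cosine of an angle) and integrating, \eqref{REMARK est 4} reduces once more to a finite weighted Gaussian integral. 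With all four conditions verified, Lemma \ref{lem:H1_Jg} delivers $Jg \in H^{1}(\Omega \times \mathbb{R}^{3})$.
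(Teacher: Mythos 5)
Your overall route is exactly the one the paper intends: the corollary is stated immediately after Lemmas \ref{est_Jg_L2}, \ref{est_Jg_H1_x} and \ref{est_Jg_H1_v} with no separate proof, so the implicit argument is precisely ``verify \eqref{REMARK est 1}--\eqref{REMARK est 4} and invoke Lemma \ref{lem:H1_Jg}.'' Your verifications of the first three conditions are correct and match what the authors must have had in mind: $\Gamma_v^-\subseteq\partial\Omega$ has surface measure bounded by $|\partial\Omega|$ uniformly in $v$; for \eqref{REMARK est 2} the bound $\nu(v)^2/|v|^2\lesssim(1+|v|)^{2\gamma}|v|^{-2}$ from \eqref{AA} is integrable against $e^{-2a|v|^2}$ in $\mathbb{R}^3$ since $|v|^{-2}$ is locally integrable in three dimensions; and for \eqref{REMARK est 3} the Euclidean Lipschitz bound transfers (via comparability of intrinsic and Euclidean distance on the compact $C^2$ surface, plus Rademacher) to $|\nabla_x g(z,v)|\lesssim e^{-a|v|^2}$ a.e. Your observation $N^2(z,v)\le 1$ is also correct.

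The genuine gap is your treatment of \eqref{REMARK est 4}, and you half-diagnosed it yourself: the stated hypotheses control $g$ pointwise and in the $x$-increment only, and impose \emph{no} regularity in $v$. Your patch --- writing $g(x,v)=h(x,v)e^{-a|v|^2}$ with $h$ bounded and $x$-Lipschitz --- does not close the gap, because the claimed bound $|\nabla_v g|\lesssim(1+|v|)e^{-a|v|^2}$ uses $|\nabla_v h|\lesssim 1$, which is neither among the hypotheses nor implied by them; boundedness of $h$ says nothing about $\nabla_v h$. In fact the corollary as literally stated fails without some $v$-regularity: take $g(z,v)=W(v)e^{-a|v|^2}$ with $W$ bounded, continuous and nowhere differentiable (Weierstrass type). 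Both displayed hypotheses hold (the second trivially, since the $x$-increment vanishes), yet on any region where $x$ is interior and $v$ ranges over a compact set away from the origin, $e^{-\nu(v)\tau_{x,v}}$ is $C^1$ and bounded below, so $Jg=e^{-\nu(v)\tau_{x,v}}g(q(x,v),v)$ in $H^1$ would force $W(v)e^{-a|v|^2}\in H^1_{\mathrm{loc}}$ in $v$, a contradiction. So what you have actually proved is a corrected statement with the additional hypothesis $|\nabla_v g(z,v)|\lesssim e^{-a|v|^2}$ (or your modulated-Maxwellian form with $\nabla_v h$ bounded), under which your argument is complete; the omission you stumbled on is present in the paper's own formulation, which tacitly presumes $v$-differentiability of $g$ with matching Gaussian decay so that \eqref{REMARK est 4} can be checked at all.
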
 

\section{Appendix}

In this appendix, we give a proof to Lemma \ref{change of variable lemma}, see also \cite{I kun 1}.
\begin{lemma} 
For non-negative measurable function $f$, we have

\[ 
 \int_{\mathbb{R}^3}\int_{\Omega}\int_0^{\tau_{x,v}}f(x,v,s)dsdxdv=\int_{\mathbb{R}^3}\int_{\Omega}\int_0^{\tau_{y,-u}}f(y+tu,u,t)dtdydu.
\]
    
\end{lemma}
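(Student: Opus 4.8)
The plan is to treat the velocity as a passive parameter and carry out a measure-preserving change of variables in the $(x,s)$ variables for each fixed velocity. First I would invoke Tonelli's theorem (legitimate since $f\ge 0$) to isolate the velocity integration. Because the velocity variable $u$ on the right-hand side plays exactly the role of $v$ on the left, it suffices to prove, for each fixed $v\in\mathbb{R}^3$, the fibered identity
\[
\int_{\Omega}\int_0^{\tau_{x,v}} f(x,v,s)\,ds\,dx
=\int_{\Omega}\int_0^{\tau_{y,-v}} f(y+tv,v,t)\,dt\,dy,
\]
after which integrating against $dv$ and renaming the dummy velocity $v$ as $u$ yields \eqref{change of variable lemma eq}.

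For the fibered identity I would introduce the map $\Phi_v\colon (y,t)\mapsto (x,s):=(y+tv,\,t)$. Its Jacobian matrix is $\begin{pmatrix} I_3 & v \\ 0 & 1 \end{pmatrix}$, whose determinant equals $1$, so $dx\,ds = dy\,dt$; moreover the integrand transforms correctly since $x=y+tv$ and $s=t$ give $f(x,v,s)=f(y+tv,v,t)$. Consequently the entire content of the lemma is the matching of the two domains of integration: I must show that $\Phi_v$ is a bijection from $A:=\{(y,t): y\in\Omega,\ 0<t<\tau_{y,-v}\}$ onto $B:=\{(x,s): x\in\Omega,\ 0<s<\tau_{x,v}\}$.

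The key geometric step is this domain identification, and it is where the convexity of $\Omega$ (guaranteed by \textbf{Assumption B}, since a bounded $C^2$ domain with boundary of positive Gaussian curvature is convex) enters. I would use that $\tau_{x,v}=\inf\{s>0: x-sv\in\Omega^c\}$ is the backward exit time while $\tau_{y,-v}=\inf\{t>0: y+tv\in\Omega^c\}$ is the forward exit time, together with the fact that convexity forces the line $\{y+rv:r\in\mathbb{R}\}$ to meet $\Omega$ in a single open segment, so that membership $x-sv\in\Omega$ holds precisely on an interval. Concretely: given $(y,t)\in A$, put $x=y+tv$; since $t<\tau_{y,-v}$ the whole segment $\{y+rv:0\le r\le t\}$ lies in $\Omega$, so $x\in\Omega$, and reading that segment backward from $x$ shows $x-sv\in\Omega$ for all $0\le s\le t$, whence $\tau_{x,v}>t$ and $\Phi_v(y,t)\in B$. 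The reverse inclusion is symmetric: for $(x,s)\in B$ the point $y=x-sv$ lies in $\Omega$ with $\tau_{y,-v}>s$. This gives the bijection, and the change-of-variables formula then delivers the fibered identity.

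The only genuine obstacle is the domain-matching in the third paragraph; the Jacobian computation and the Fubini/Tonelli step are routine. I would emphasize that the nonnegativity of $f$ lets me exchange the order of integration and apply the change of variables without any integrability bookkeeping, and that convexity is exactly what rules out the backward trajectory leaving and re-entering $\Omega$, which would otherwise break the clean interval description of the fibers.
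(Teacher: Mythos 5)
Your proposal is correct and follows essentially the same route as the paper's appendix proof: there the authors use the same shear $(u,y,t)\mapsto(u,y+tu,t)$ with inverse $(v,x,s)\mapsto(v,x-sv,s)$, note its Jacobian is $1$, and verify the bijection between the two integration domains via the exit-time identity $\tau_{x-sv,-v}=\tau_{x,-v}+s$, which is exactly your domain-matching step done fiberwise in $v$. One attributional remark: since $\tau_{x,v}$ is defined as the \emph{first} exit time, each fiber $\lbrace s : 0<s<\tau_{x,v}\rbrace$ is an interval by definition and your segment argument uses only this, so the appeal to convexity via \textbf{Assumption B} is superfluous --- the paper's proof, like yours in substance, never actually needs it.
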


\begin{proof}

Consider the following sets:
\begin{align*}
&A:=\{(v,x,s) \mid v\in\R^3,x\in \Omega, 0\leq s \leq \tau_{x,v}\},\\
&B:=\{(u,y,t) \mid u\in\R^3,y\in \Omega, 0\leq t \leq \tau_{y,-u}\}.
\end{align*}
Also, Consider the change of variable:
\begin{align*}
    \sigma_1:A \longrightarrow B,& \quad  (\sigma_1(v),\sigma_1(x),\sigma_1(s)):= (v,x-vs,s),\\
    \sigma_2:B \longrightarrow A,& \quad (\sigma_2(u),\sigma_2(y),\sigma_2(t)):= (u,y+ut,t).
\end{align*}
Notice that the for any $(v,x,s) \in A$ we have $x-vs \in \Omega$, $\tau_{\sigma_1(x),-\sigma_1(v)}=\tau_{x-sv,-v}=\tau_{x,-v}+s \geq s =\sigma_1(s) \geq 0$. Hence the map $\sigma_1$ is well defined. Similarly, the map $\sigma_2$ is also well defined, and the fact $\sigma_1 \circ \sigma_2=I_B,\sigma_2 \circ \sigma_1=I_A$, show that the map is a bijection. This with the identities of Jacobean implies \eqref{change of variable lemma eq}.
    
\end{proof}

\begin{center}
\includegraphics{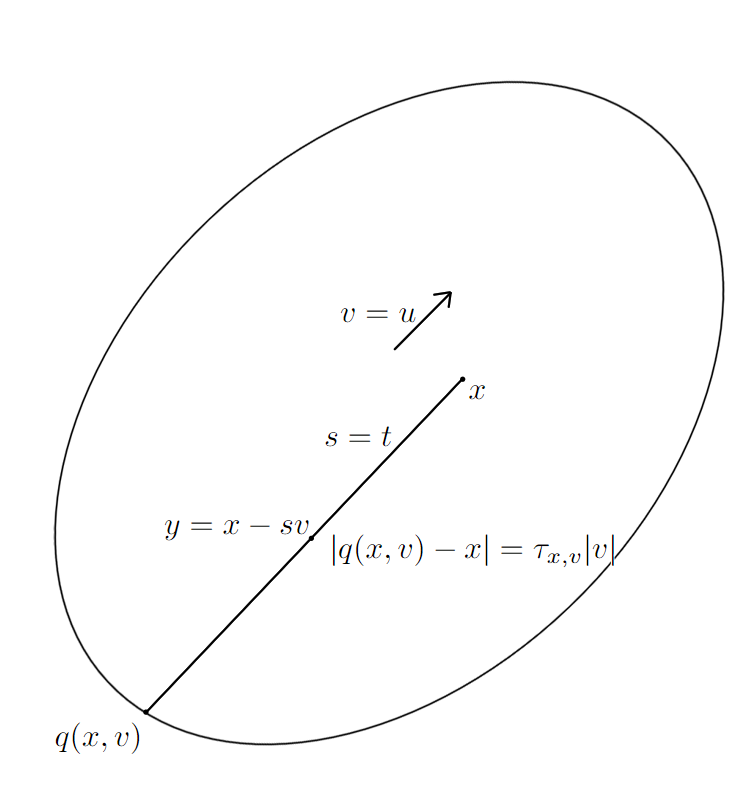}
\end{center}

\section*{Acknowledgement}

This publication is supported in part by the National Taiwan University-Kyoto University Joint Funding. The forth author is supported by JSPS KAKENHI grant number 20K14344.

\end{document}